\newtheorem{theorem}{Theorem}[section]
\newtheorem{lemma}[theorem]{Lemma}
\newtheorem{proposition}[theorem]{Proposition}
\theoremstyle{definition}
\newtheorem*{definition*}{Definition}
\newtheorem{definition}[theorem]{Definition}
\theoremstyle{remark}
\newtheorem*{remark*}{Remark}
\newtheorem{remark}[theorem]{Remark}
\numberwithin{equation}{section}
\newcommand{\myproof}[2]{Proof of {#1} {#2}}
\begin{document}

\title[Berezin-Toeplitz Quantization in Real Polarizations with Toric Singularities]{Berezin-Toeplitz Quantization in Real Polarizations with Toric Singularities}

\author{NaiChung Conan Leung, AND YuTung Yau}
\address{The Institute of Mathematical Sciences and Department of Mathematics, The Chinese University of Hong Kong, Shatin, Hong Kong}
\email{leung@math.cuhk.edu.hk}
\address{The Institute of Mathematical Sciences and Department of Mathematics, The Chinese University of Hong Kong, Shatin, Hong Kong}
\email{ytyau@math.cuhk.edu.hk}

\thanks{}

\maketitle

\begin{abstract}
	On a compact K\"ahler manifold $X$, Toeplitz operators determine a deformation quantization $(\operatorname{C}^\infty(X, \mathbb{C})[[\hbar]], \star)$ with separation of variables \cite{K1996} with respect to transversal complex polarizations $T^{1, 0}X, T^{0, 1}X$ as $\hbar \to 0^+$ \cite{S2000}. The analogous statement is proved for compact symplectic manifolds with transversal non-singular real polarizations \cite{LY2021}.\par
	In this paper, we establish the analogous result for transversal \emph{singular} real polarizations on compact toric symplectic manifolds $X$. Due to toric singularities, half-form correction and localization of our Toeplitz operators are essential. Via norm estimations, we show that these Toeplitz operators determine a star product on $X$ as $\hbar \to 0^+$.
\end{abstract}

\section{Introduction}
\emph{Berezin-Toeplitz (BT) quantization} \cite{BMS1994, KS2000, S2000} is a quantization scheme on a compact K\"ahler manifold $(X, \omega)$, where $[\omega] = c_1(L)$ with $L$ a prequantum line bundle, bridging deformation quantization and geometric quantization. The \emph{Toeplitz operators} $Q^\hbar: \mathcal{C}^\infty(X, \mathbb{C}) \to \operatorname{End}_\mathbb{C} H^0(X, L^{\otimes k})$ completely determine a deformation quantization $(\mathcal{C}^\infty(X, \mathbb{C})[[\hbar]], \star^{\operatorname{BT}})$, which acts on $H^0(X, L^{\otimes k})$ asymptotically as $\hbar = \tfrac{1}{k} \mapsto 0^+$ \footnote{Throughout this paper, apart from the discussion on deformation quantization, we always let $\hbar = \tfrac{1}{k}$.}. In fact, this star product is particularly nice, namely it is with separation of variables \cite{K1996} and it can be obtained directly using BV quantization using the transversal complex polarizations $T^{1, 0}X$ and $T^{0, 1}X$ \cite{CLL2020a, CLL2020b, CLL2020c, CLL2021}.\par
In \cite{LY2021}, the authors established corresponding results for real polarizations where analysis involving distributions are needed. $T^{1, 0}X$ and $T^{0, 1}X$ for any complex polarization are now replaced by transversal real polarizations. Compact symplectic manifolds admitting transversal real polarizations, called \emph{para-Kahler manifolds} \cite{EST2006}, are essentially symplectic tori up to finite covers.\par
In this paper, we study BT quantization in \emph{singular} real polarizations on a compact toric symplectic manifold $X$. Let
\begin{equation*}
	\mu: X \to \mu(X) = P_X
\end{equation*}
be a moment map for the action of $T = \mathbb{R}^n/(2\pi \mathbb{Z})^n$. It determines a real polarization $\mathcal{P}^{\operatorname{v}}$, singular along the union $D$ of toric divisors in $X$. On $\mathring{X} = X \backslash D \cong (\mathbb{C}^*)^n$, we could choose action-angle coordinates $(x, \theta)$, thus the theta coordinates induce another singular real polarization $\mathcal{P}^{\operatorname{h}}$ on $X$ transversal to $\mathcal{P}^{\operatorname{v}}$. Assuming $[\tfrac{\omega}{2\pi}] - \tfrac{1}{2} c_1(X) \in H^2(X, \mathbb{Z})$, we obtain the half-form corrected quantum Hilbert space $\mathcal{H}^\hbar$ in polarization $\mathcal{P}^{\operatorname{v}}$ for each odd $k \in \mathbb{N}$. Using transversal polarizations $\mathcal{P}^{\operatorname{h}}, \mathcal{P}^{\operatorname{v}}$, we define a star product $\star$ with separation of variables on $(X, \omega)$, which is singular on $D$, and construct a Toeplitz-type operator $Q^\hbar: \mathcal{C}^\infty(X, \mathbb{C}) \to \operatorname{End}_\mathbb{C} \mathcal{H}^\hbar$. Our main result is that $\star$ is determined by $Q^\hbar$ with $\hbar \to 0^+$. Naively we would like to have the norm estimation
\begin{equation}
\label{Equation 2.13}
\lVert E_N^\hbar \rVert = \operatorname{O}(\hbar^{N+1}),
\end{equation}
for the error term $E_N^\hbar = Q_f^\hbar \circ Q_g^\hbar - Q_{f \star_N g}^\hbar$ , where $f \star_N g$ is the $N$th order truncation of the power series $f \star g$ in $\hbar$. Due to singularity, however, we need to consider the Poisson subalgebra $\mathcal{C}^\infty(X, \mathbb{C})_{<\infty}$ of functions $f \in \mathcal{C}^\infty(X, \mathbb{C})$ having finitely many non-zero fibrewise Fourier coefficients and the operator norm $\left\lVert E_N^\hbar \right\rVert_V$ of the restriction $E_N^\hbar \vert_{\mathcal{H}_V^\hbar}: \mathcal{H}_V^\hbar \to \mathcal{H}^\hbar$, where $\mathcal{H}_V^\hbar$ is the subspace of quantum states in $\mathcal{H}^\hbar$ supported on a relatively compact open subset $V$ of the interior $\mathring{P}_X$ of $P_X$. Our main result is as follows.
\begin{theorem}
	\label{Theorem 1.1}
	Let $f \in \mathcal{C}^\infty(X, \mathbb{C})$, $g \in \mathcal{C}^\infty(X, \mathbb{C})_{< \infty}$ and $V$ be a relatively compact open subset of $\mathring{P}_X$. Then there exists $\delta > 0$ such that for all $N \in \mathbb{N} \cup \{0\}$, there exists $K > 0$ such that
	\begin{equation}
	\left\lVert Q_f^\hbar \circ Q_g^\hbar - Q_{f \star_N g}^\hbar \right\rVert_V \leq K \hbar^{N+1},
	\end{equation}
	for all odd $k \in \mathbb{N}$ with $\hbar = \tfrac{1}{k} < \delta$.
\end{theorem}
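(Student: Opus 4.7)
The plan is to reduce the asymptotic estimate to explicit Fourier-mode computations on the open dense subset $\mathring{X} \cong (\mathbb{C}^*)^n$, where action-angle coordinates $(x, \theta)$ are available and the non-singular framework of \cite{LY2021} can be adapted mode-by-mode; the localization to $V \Subset \mathring{P}_X$ together with the finite Fourier support of $g$ are the levers that upgrade pointwise estimates on Bohr-Sommerfeld basis vectors to a uniform operator-norm bound.

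Concretely, I would first fix an orthogonal basis $\{s_m\}_{m \in B_\hbar}$ of $\mathcal{H}^\hbar$ consisting of the half-form-corrected distributional sections supported on the Bohr-Sommerfeld fibers $\mu^{-1}(\hbar m)$, where $B_\hbar$ indexes the relevant shifted lattice points in $P_X$ (the half-integer shift arising from the half-form correction together with the parity assumption on $k$), so that $\mathcal{H}_V^\hbar = \operatorname{span}\{ s_m : \hbar m \in V \}$. Writing the fibrewise Fourier expansions $f = \sum_{\alpha \in \mathbb{Z}^n} f_\alpha(x) e^{i \alpha \cdot \theta}$ on $\mathring{X}$ and $g = \sum_{\beta \in F} g_\beta(x) e^{i \beta \cdot \theta}$ with $F \subset \mathbb{Z}^n$ finite, I expect each pure Fourier mode to act as a shift composed with a multiplication,
\begin{equation*}
Q_{f_\alpha e^{i \alpha \cdot \theta}}^\hbar s_m = c_\alpha(\hbar, m) \cdot f_\alpha\bigl(\hbar(m + \tfrac{\alpha}{2})\bigr) \cdot s_{m + \alpha},
\end{equation*}
where $c_\alpha(\hbar, m)$ is an explicit half-form correction factor that remains bounded whenever $\hbar m$ and $\hbar(m + \alpha)$ stay in a fixed compact $V' \Subset \mathring{P}_X$.

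Composing two such operators gives $Q_f^\hbar Q_g^\hbar s_m = \sum_{\alpha \in \mathbb{Z}^n, \beta \in F} (\ldots) \, s_{m + \alpha + \beta}$, and the separation-of-variables structure of $\star$ relative to $(\mathcal{P}^{\operatorname{h}}, \mathcal{P}^{\operatorname{v}})$ guarantees that each bidifferential term $C_r(f, g)$ uses only $\partial_\theta$-derivatives of $f$ (which on a single Fourier mode pull out powers of $\alpha$) and $\partial_x$-derivatives of $g_\beta$. Matching coefficients, the contribution to $E_N^\hbar s_m$ on $s_{m + \alpha + \beta}$ is exactly the $\hbar$-Taylor remainder at order $N$ of the product $c_\alpha(\hbar, m) \, f_\alpha(\hbar m + \tfrac{\hbar \alpha}{2}) \cdot c_\beta(\hbar, m + \alpha) \, g_\beta(\hbar(m+\alpha) + \tfrac{\hbar \beta}{2})$ (or the appropriate symmetrized expression) expanded about $\hbar m$; this remainder is of order $\hbar^{N+1}$ with prefactor bounded by $\lVert f_\alpha \rVert_{C^{N+1}(V')} \lVert g_\beta \rVert_{C^{N+1}(V')}$ times a polynomial in $|\alpha|$ and $|\beta|$.

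For the operator norm, orthogonality of the $s_m$ reduces $\lVert E_N^\hbar s_m \rVert^2$ to a sum of squared coefficients weighted by ratios $\lVert s_{m + \gamma} \rVert^2 / \lVert s_m \rVert^2$, which are uniformly bounded for $\hbar(m + \gamma) \in V'$. Since $\bar V \subset \mathring{P}_X$ and $F$ is finite, one can choose $V' \Subset \mathring{P}_X$ containing all shifts $\hbar(m + \alpha + \beta)$ with $\hbar m \in V$, $\beta \in F$, and $|\alpha|$ in any prescribed bounded range, once $\hbar < \delta$ is sufficiently small. The principal obstacle I anticipate is the unbounded Fourier support of $f$: modes with $|\alpha| \to \infty$ can shift Bohr-Sommerfeld points arbitrarily far and potentially toward the toric divisor $D$, where the half-form factors degenerate. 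This is resolved by invoking the rapid decay of $\lVert f_\alpha \rVert_{C^{N+1}(V')}$ as $|\alpha| \to \infty$ (a consequence of $f \in \mathcal{C}^\infty(X, \mathbb{C})$) and combining it with at most polynomial growth of the half-form factors in $|\alpha|$; together these absorb the large-$|\alpha|$ tail into the $\mathrm{O}(\hbar^{N+1})$ bound, yielding the claimed estimate on $\lVert E_N^\hbar \rVert_V$.
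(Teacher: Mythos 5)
Your overall architecture --- Bohr--Sommerfeld basis, matrix entries given by fibrewise Fourier coefficients evaluated at lattice points, matching against $\star_N$ via Taylor expansion, and using $V \Subset \mathring{P}_X$ together with the finite Fourier support of $g$ --- is the right one, but two steps as written do not go through. First, the Taylor matching is set up the wrong way around. With the actual conventions (Proposition \ref{Proposition 3.9}), $Q_f^\hbar \sigma_\hbar^m = \sum_q \widehat{f}_q(\hbar m)\, \sigma_\hbar^{m+q}$ --- evaluation at the source point, no midpoint and no correction factor $c_\alpha$ --- and in $C_r(f,g)$ the $x$-derivatives fall on $f$ while the $\theta$-derivatives fall on $g$. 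Hence the entry of $Q_f^\hbar \circ Q_g^\hbar$ is $\widehat{f}_{m-r-q}(\hbar(r+q))\,\widehat{g}_q(\hbar r)$ and the comparison with $Q_{f\star_N g}^\hbar$ is a Taylor expansion of $\widehat{f}_{m-r-q}$ about $\hbar r$ with increment $\hbar q$, where $q$ ranges only over the \emph{finite} set of Fourier modes of $g$; after localizing to $V$ (Lemma \ref{Lemma 3.12}) every intermediate point $\hbar(r+q)$ stays in the corrected polytope and in a fixed compact subset of $\mathring{P}_X$, so Taylor's theorem with integral remainder applies to every surviving term and only derivative bounds of $\widehat{f}$ on a compact set (Lemma \ref{Lemma 5.3}) are needed. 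In your version you expand with increment $\hbar\alpha$ attached to the modes of $f$ (and put $\partial_\theta$ on $f$, $\partial_x$ on $g$), which is exactly what the hypotheses cannot control: $\alpha$ is unbounded, $\hbar(m+\alpha)$ can leave $P_X$, the corresponding entries of $Q_f^\hbar \circ Q_g^\hbar$ are truncated to zero while the $\star_N$ entries are not, and for such terms there is no Taylor remainder to estimate (note $\widehat{f}_p$ does not extend smoothly by zero, and the derivative bounds of Lemma \ref{Lemma 5.3} hold only on compact subsets of $\mathring{P}_X$). The rapid decay of $\lVert f_\alpha \rVert_{C^{N+1}}$ that you invoke does not repair this, because the problematic comparison is not a remainder at all; with the correct allocation of derivatives the ``large $\alpha$'' obstacle you flag never arises, since the unbounded $f$-modes enter only through the decaying coefficient $\widehat{f}_{m-r-q}$, never through a Taylor increment.

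Second, the passage to the operator norm is incomplete. Bounding $\lVert E_N^\hbar \sigma_\hbar^m \rVert$ for each basis vector only controls columns, and since $\dim \mathcal{H}_V^\hbar$ grows like $k^n$ this loses a factor of order $\hbar^{-n/2}$ in $\lVert E_N^\hbar \rVert_V$. One needs a Schur-type argument: the paper bounds the operator norms of $E_N^\hbar \circ \pi_V^\hbar$ with respect to the coefficient norms $\lVert \cdot \rVert_1$ and $\lVert \cdot \rVert_\infty$ (row and column sums, using the $[m]^{-2}$ decay of the Fourier coefficients) and then interpolates via $\lVert A \rVert^2 \leq \lVert A \rVert_1 \lVert A \rVert_\infty$ (Lemma \ref{Lemma 6.5}). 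Your decay estimates are the right input for such an argument, but the step itself is missing from the proposal as stated.
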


This theorem implies that the Toeplitz operators $Q^\hbar$ determine the singular star product $\star$ as $\hbar \to 0^+$.\par
This theorem is a counterpart of Theorem 1.1 in \cite{LY2021}. Nonetheless, there are several subtleties due to the toric singularities. First, the half-form correction \cite{KMN2013} plays a crucial role in the construction of $Q^\hbar$ by prohibiting the existence of degenerate Bohr-Sommerfeld fibers. This avoids problems caused by singularity of the star product $\star$ on degenerate $T$-orbits of $X$. Adopting this correction, we need to assume $[\tfrac{\omega}{2\pi}] - \tfrac{1}{2} c_1(X) \in H^2(X, \mathbb{Z})$. Then $[\tfrac{\omega}{2\pi}] - \tfrac{1}{2} c_1(X)$ is the first Chern class of a line bundle $L_+$, which is naively regarded as $L \otimes \sqrt{K}$, although $L$ and $\sqrt{K}$ might not be well-defined line bundles. Indeed, $L$ is a line bundle if and only if $X$ is spin. In any case, the `square' $L^{\otimes 2}$ is a line bundle and thus for $k$ odd, we have a line bundle $L_+^k$, which is regarded as $L^{\otimes k} \otimes \sqrt{K}$, whose first Chern class is $k[\tfrac{\omega}{2\pi}] - \tfrac{1}{2} c_1(X)$, serving as the (level-$k$) half-form corrected prequantum line bundle. The corresponding quantum Hilbert space $\mathcal{H}^\hbar$ in polarization $\mathcal{P}^{\operatorname{v}}$ has a Bohr-Sommerfeld bases $\{\sigma_\hbar^m\}_{m \in \Lambda_\hbar}$ for which $\sigma_\hbar^m$ is supported on the level-$k$ Bohr-Sommerfeld fibre over the point $x = \hbar m$ with $m \in \Lambda_\hbar$, where $\Lambda_\hbar$ is the set of lattice points in the \emph{corrected polytope} in level $k$.\par
Second, similar to \cite{LY2021}, the proof of our main theorem requires estimations on the fibrewise Fourier coefficients of functions and their derivatives on $X$. However, we need to be careful what is meant by fibrewise Fourier transform as $X$ has degenerate $T$-orbits. This was discussed in \cite{C2007}, which studied the groupoid approach to quantization for toric manifolds, and the definition of fibrewise Fourier transform depends on the choice of the action-angle coordinates $(x, \theta)$. We shall see from Proposition \ref{Proposition 3.9} that the Toeplitz operator $Q_f^\hbar$ for $f \in \mathcal{C}^\infty(X, \mathbb{C})$ is a linear combination of multiplication by the fibrewise Fourier coefficients $\widehat{f}_p$ of $f$ followed by the respective \emph{truncated} shift operators by $\hbar p$ (for $p \in \mathbb{Z}^n$). For instance, take $X = \mathbb{C}\mathbb{P}^1$ and suppose $f$ is of the form
\begin{align*}
	f(x, \theta) = \widehat{f}_p(x) e^{\sqrt{-1} p \theta} \quad \text{on } \mathring{X},
\end{align*}
with $p \in \mathbb{Z}$, i.e. all fibrewise Fourier coefficients of $f$ but $\widehat{f}_p$ vanish everywhere. Then for $m \in \Lambda_\hbar$,
\begin{equation*}
	Q_f^\hbar \sigma_\hbar^m = \begin{cases}
	\widehat{f}_p(\hbar m) \sigma_\hbar^{m+p}, & \quad \text{if } m + p \in \Lambda_\hbar;\\
	0, & \quad \text{otherwsie.}
	\end{cases}
\end{equation*}\par
Third, from the above observation that $Q^\hbar$ involves shift operators which are truncated, we have to cut off quantum states too close to the boundary of $P_X$ by taking a relatively compact open subset $V$ of $\mathring{P}_X$, and control fibrewise Fourier coefficients $\widehat{g}_q$ of the function $g$ appeared in the error term $E_N^\hbar$ for $q$ large by restricting $g$ to be in $\mathcal{C}^\infty(X, \mathbb{C})_{<\infty}$.\par
As a remark, Kirwin, Mour\~ao and Nunes \cite{KMN2013} showed that under the degeneration of K\"ahler polarizations to the singular real polarization $\mathcal{P}^{\operatorname{v}}$, the quantum Hilbert space $\mathcal{H}^\hbar$ in polarization $\mathcal{P}^{\operatorname{v}}$ is the limit of the quantum Hilbert spaces in K\"ahler polarizations. However, the Berezin-Toeplitz star products and Toeplitz operators do not converge. The reason is that both $T^{1, 0}X$ and $T^{0, 1}X$, which are transversal complex polarizations, converge to the same polarization $\mathcal{P}^{\operatorname{v}}$. Therefore, we lose a pair of transversal polarizations under K\"ahler degeneration, and our Toeplitz operators are not the limit of Toeplitz operators for K\"ahler polarizations.\par
The paper is organized as follows. In Section \ref{Section 2}, we review toric geometry which is relevant to our construction of Toeplitz-type operators. In Sections \ref{Section 3} and \ref{Section 4}, we discuss deformation quantizations compatible with singular real polarizations and half-form corrected geometric quantization of $X$ by the work \cite{KMN2013} respectively. In Section \ref{Section 5}, we provide a proof of our main theorem.




\subsection{Acknowledgement}
\quad\par
We thank Kwok Wai Chan, Qin Li, Si Li and Ziming Nikolas Ma for useful comments as well as helpful discussions. This research was substantially supported by grants from the Research Grants Council of the Hong Kong Special Administrative Region, China (Project No. CUHK14301619 and CUHK14306720) and direct grants from the Chinese University of Hong Kong.

\section{Preliminaries on Compact Toric Symplectic Manifolds}
\label{Section 2}
In this section, we set up notations on a compact toric symplectic manifold and review certain aspects on them that are relevant to our construction of Toeplitz-type operators in Section \ref{Section 5}, including action-angle coordinates, polytopes and $T$-equivariant Hermitian line bundles, fibrewise Fourier transform and (singular) real polarizations.\par 
Our notations mainly follow from \cite{KMN2013}. Consider a compact toric symplectic manifold $(X, \omega)$ with a toric action of $T = \mathbb{R}^n/(2\pi \mathbb{Z})^n$ and a moment map
\begin{align*}
	\mu: X \to \mathfrak{t}^* \cong \mathbb{R}^n,
\end{align*}
where $\mathfrak{t}$ is the Lie algebra of $T$. Let $\Sigma$ be the normal fan associated to the moment polytope $P_X := \mu(X)$ of $(X, \omega)$ and $\Sigma^{(1)}$ be the set of rays in $\Sigma$. We have 
\begin{align*}
	P_X = \{ x \in \mathfrak{t}^*: \text{for all } F \in \Sigma^{(1)}, l_F(x) \geq 0 \},
\end{align*}
where $l_F(x) := \langle x, \nu_F \rangle + \lambda_F$, $\nu_F$ is the integral inward pointing vector normal to the facet $F$ of $P_X$ and $\lambda_F \in \mathbb{R}$.

\subsection{Action-angle coordinates}
\quad\par
On $(X, \omega)$ there is a set of action-angle coordinates useful for later discussions, which is indexed by vertices of $P_X$ and a distinguished element $\circ$ indicating a chart on $\mathring{X} := \mu^{-1}(\mathring{P}_X)$, where $\mathring{P}_X$ is the interior of $P_X$. Let $\operatorname{Vert}(P_X)$ be the set of vertices of $P_X$ and $\overline{\operatorname{Vert}}(P_X) = \operatorname{Vert}(P_X) \sqcup \{\circ\}$.
\begin{itemize}
	\item On $\mathring{X} \cong \mathring{P}_X \times T \cong (\mathbb{C}^*)^n$, we have action-angle coordinates $(x, \theta)$, so that $\mu(x, \theta) = x$ and $\omega \vert_{\mu^{-1}(\mathring{P}_X)} = \sum_{i=1}^n dx^i \wedge d\theta^i$. We also write $(x_\circ, \theta_\circ)$ for $(x, \theta)$ and $U_\circ$ for $\mathring{X}$.
	\item Suppose $v \in \operatorname{Vert}(P_X)$. There are exactly $n$ facets of $P_X$ adjacent to $v$ and we order them as $F_{v, 1}, ..., F_{v, n} \in \Sigma^{(1)}$. Define new coordinates $x_v$ on $\mathfrak{t}^*$ and $\theta_v$ on $T$ by
	\begin{equation}
	\label{Equation 3.1}
	x_v := A_v x + \lambda_v, \quad \theta_v := {}^{\operatorname{t}}A_v^{-1} \theta,
	\end{equation}
	where $A_v = {}^{\operatorname{t}}\begin{pmatrix}
	\nu_{F_{v, 1}} & \cdots \nu_{F_{v, n}}
	\end{pmatrix} \in \operatorname{GL}(n, \mathbb{Z})$ and $\lambda_v = {}^{\operatorname{t}}(\lambda_{F_{v, 1}}, ..., \lambda_{F_{v, n}})$. In addition, define
	\begin{equation*}
		U_v := \mu^{-1} \left( \{v\} \cup \bigcup_{F: \text{faces of } P_X \text{adjacent to } v } \mathring{F} \right),
	\end{equation*}
	where $\mathring{F}$ denotes the interior of a face $F$ of $P_X$ (By convention, $\mathring{\{v\}} = \{v\}$). We call $(U_v, x_v, \theta_v)$ a \emph{vertex coordinate chart}. On $U_v$, the symplectic form can be written as $\omega \vert_{U_v} = \sum_{i=1}^n dx_v^i \wedge d\theta_v^i$, but note that $(x_v, \theta_v)$ are singular along the faces of $P_X$ adjacent to $v$ with positive codimension in the same way that polar coordinates are singular at the origin. Indeed, one can obtain nonsingular coordinates $(a_v, b_v)$ on $U_v$ via the transformations:
	\begin{equation*}
		a_v^i := \sqrt{x_v^i} \cos \theta_v^i, \quad b_v^i := \sqrt{x_v^i} \sin \theta_v^i, \quad \text{for all } i \in \{1, ..., n\}.
	\end{equation*}
\end{itemize}

\subsection{Polytopes and $T$-equivariant Hermitian line bundles}
\quad\par
\label{Subsection 2.2}
In this subsection, we shall review the correspondence between $T$-equivariant ample line bundles on $X$ and integral polytopes with normal fan $\Sigma$. For more details, see Subsection 2.2.3 in \cite{KMN2013}.\par
Consider a map $\Sigma^{(1)} \to \mathbb{R}$ given by $F \mapsto \lambda_F^L$. There associates the following polytope
\begin{align*}
P_L := \{ x \in \mathfrak{t}^*: \text{for all } F \in \Sigma^{(1)}, l_F^L(x) \geq 0 \},
\end{align*}
where $l_F^L(x) := \langle x, \nu_F \rangle + \lambda_F^L$ (for instance, the polytope associated to the map $\Sigma^{(1)} \to \mathbb{R}$ given by $F \mapsto \lambda_F$ is the moment polytope $P_X$ for the moment map $\mu: X \to P_X$). For any $v \in \operatorname{Vert}(P_X)$, define $\lambda_v^L = {}^{\operatorname{t}} (\lambda_{F_{v, 1}}^L, ..., \lambda_{F_{v, n}}^L)$, where $F_{v, 1}, ..., F_{v, n} \in \Sigma^{(1)}$ are the $n$ facets of $P_X$ adjacent to $v$ with the same ordering given as in Section \ref{Section 2}.\par
When $\lambda_F^L \in \mathbb{Z}$ for all $F \in \Sigma^{(1)}$, there furthermore associate a $T$-equivariant Hermitian line bundle $(L, h^L)$ with the following defining conditions. We have a local unitary frame $\mathbf{1}_{v, L}^{\operatorname{U}(1)}$ of $(L, h^L)$ over $U_v$ for each $v \in \overline{\operatorname{Vert}}(P_X)$ such that
\begin{itemize}
	\item if $v \in \operatorname{Vert}(P_X)$, then $\mathbf{1}_{v, L}^{\operatorname{U}(1)} \vert_{U_\circ \cap U_v} = e^{-\sqrt{-1} \lambda_v^L \cdot \theta_v} \mathbf{1}_{\circ, L}^{\operatorname{U}(1)} \vert_{U_\circ \cap U_v}$; and
	\item if $v, v' \in \operatorname{Vert}(P_X)$, then $\mathbf{1}_{v, L}^{\operatorname{U}(1)} \vert_{U_v \cap U_{v'}} = e^{\sqrt{-1} (\lambda_{v'}^L - A_{v'} A_v^{-1} \lambda_v^L) \cdot \theta_{v'}} \mathbf{1}_{v', L}^{\operatorname{U}(1)} \vert_{U_v \cap U_{v'}}$.
\end{itemize}
In particular, we can define a map
\begin{equation}
\Sigma^{(1)} \to H^2(X, \mathbb{Z}), \quad F \mapsto D_F := c_1(L),
\end{equation}
where $(L, h^L)$ is the $T$-equivariant Hermitian line bundle associated to the map $\Sigma^{(1)} \to \mathbb{Z}$ given by $F' \mapsto \lambda_{F'}^L$, where $\lambda_{F'}^L = 1$ if $F' = F$ and $\lambda_{F'}^L = 0$ otherwise. Then $\{D_F\}_{F \in \Sigma^{(1)}}$ forms a set of generators of $H^2(X, \mathbb{Z})$. We have $[\tfrac{\omega}{2\pi}] = \sum_{F \in \Sigma^{(1)}} \lambda_F D_F$ and the first Chern class of the symplectic manifold $(X, \omega)$ is given by
\begin{equation}
\label{Equation 2.3}
c_1(X) = \sum_{F \in \Sigma^{(1)}} D_F.
\end{equation}
Note that for a general map $\Sigma^{(1)} \to \mathbb{Z}, F \mapsto \lambda_F^L$, the associated line bundle $L$ is ample if and only if the normal fan of $P_L$ is the same as that of $P_X$, i.e. $\Sigma$.

\subsection{Fibrewise Fourier transform}
\quad\par
In this subsection, we shall introduce fibrewise Fourier transform of smooth functions on $X$, on which calculations in the later subsections heavily rely. It is essentially the same as that appeared in Theorem 1.3 in \cite{C2007} by Cadet, but we shall use a slightly different description.\par
The idea is that we know how to perform (fibrewise) Fourier transform of functions on $P_X \times T$ into functions on $P_X \times \mathbb{Z}^n$. We shall construct a continuous map $\operatorname{Bl}_X: P_X \times T \to X$ so that the diagram commutes:
\begin{center}
	\begin{tikzcd}
		P_X \times T \ar[rr, "\operatorname{Bl}_X"] \ar[rd] && X \ar[ld, "\mu"]\\
		& P_X
	\end{tikzcd}
\end{center}
where $P_X \times T \to P_X$ is the canonical projection, and pull back functions on $X$ along $\operatorname{Bl}_X$. This requires the chosen set of action-angle coordinates $\{(x_v, \theta_v)\}_{v \in \overline{\operatorname{Vert}(P_X)}}$ described in Section \ref{Section 2}. Indeed, $(x, \theta)$ are coordinates on $\mathring{X} \cong \mathring{P}_X \times T =: \tilde{U}_\circ$. Define a map
\begin{align*}
	\operatorname{Bl}: [0, +\infty)^n \times (\mathbb{R}^n/(2\pi \mathbb{Z})^n) & \to \mathbb{R}^{2n},\\
	(y^1, ..., y^n, \eta^1, ..., \eta^n) & \mapsto (\sqrt{y^1}\cos \eta^1, ..., \sqrt{y^n}\cos \eta^n, \sqrt{y^1}\sin \eta^1, ..., \sqrt{y^n}\cos \eta^n).
\end{align*}
For any vertex $v \in \operatorname{Vert}(P_X)$, we identify $U_v$ as a subset of $\mathbb{R}^{2n}$ via the embedding $(a_v, b_v)$ and take $\tilde{U}_v$ to be the preimage $\operatorname{Bl}^{-1}(U_v)$. On $\tilde{U}_v$, $(x_v, \theta_v)$ becomes a non-singular coordinate chart. In coordinates, the map $\operatorname{Bl} \vert_{\tilde{U}_v}$ is just given by $(x_v, \theta_v) \mapsto (a_v, b_v)$. Then $\{ (\tilde{U}_v, x_v, \theta_v) \}_{v \in \overline{\operatorname{Vert}(P_X)}}$ forms an atlas on the smooth manifold with corners $P_X \times T$. We glue the maps $\operatorname{Bl} \vert_{\tilde{U}_v}$ to a map $\operatorname{Bl}_X: P_X \times T \to X$, i.e. for all $v \in \operatorname{Vert}(P_X)$, the diagram
\begin{center}
	\begin{tikzcd}
		\tilde{U}_v \ar[d] \ar[r, "\operatorname{Bl}_X"] & U_v \arrow[d, "(a_v{,} b_v)"]\\
		{[}0, +\infty{)}^n \times T \ar[r, "\operatorname{Bl}"'] & \mathbb{R}^{2n}
	\end{tikzcd}
\end{center}
commutes, where $\tilde{U}_v \hookrightarrow [0, +\infty)^n \times T$ is the inclusion map. $P_X \times T$ might be considered as a real blow-up 
of $X$ along $X \backslash \mathring{X}$ with a blow-down map $\operatorname{Bl}_X: P_X \times T \to X$.\par
At a point in the interior $\mathring{P}_X \times T$, $\operatorname{Bl}_X$ is clearly smooth. Observe, however, that on the boundary of $\tilde{U}_v$ with $v \in \operatorname{Vert}(P_X)$, the map $\operatorname{Bl} \vert_{\tilde{U}_v}: \tilde{U}_v \to U_v$ is only smooth in the last $n$ variables $\theta_v^1, ..., \theta_v^n$. Therefore, at a point in the boundary $\partial P_X \times T$, the blow-down map $\operatorname{Bl}_X$ is continuous but not differentiable. A consequence is that a smooth function $f$ on $X$ is pulled back via $\operatorname{Bl}_X$ to a continuous function on $P_X \times T$ which might not be smooth, although it is smooth on $\mathring{P}_X \times T \cong \mathring{X}$. Yet, the map $\operatorname{Bl}_X: P_X \times T \to X$ is still useful in defining fibrewise Fourier transform. 

\begin{definition}
	Let $f \in \mathcal{C}^\infty(X, \mathbb{C})$. For $p \in \mathbb{Z}^n$, the $p$\emph{th fibrewise Fourier coefficient} of $f$, denoted by $\widehat{f}_p$, is defined as the pushforward of $\frac{1}{(2\pi)^n} (\operatorname{Bl}_X^*f)(x, \theta) e^{-\sqrt{-1} p \cdot \theta} d^n\theta$ along $\mu \circ \operatorname{Bl}_X$:
	\begin{equation}
	\widehat{f}_p(x) := \frac{1}{(2\pi)^n} \int_T (\operatorname{Bl}_X^*f)(x, \theta) e^{-\sqrt{-1} p \cdot \theta} d^n\theta, \quad x \in P_X.
	\end{equation}
	The \emph{fibrewise Fourier transform} of $f$, denoted by $\widehat{f}$, is the defined as the function
	\begin{align*}
	P_X \times \mathbb{Z}^n \to \mathbb{C}, \quad (x, p) \mapsto \widehat{f}_p(x).
	\end{align*}
\end{definition}

\begin{remark}
	In Cadet's construction \cite{C2007} of Fourier transform of functions on $X$, one requires a continuous section of the fibration $\mu: X \to P_X$. Indeed, the set of action-angle coordinates that we fixed provides such a continuous section determined by the map $\mathring{P}_X \ni x \mapsto (x, 0) \in \mathring{X}$.
\end{remark}

We then see that for $f \in \mathcal{C}^\infty(X, \mathbb{C})$, $\widehat{f}$ satisfies the following properties. For any $p \in \mathbb{Z}^n$, $\widehat{f}_p$ is smooth on $\mathring{P}_X$ and continuous on $P_X$, but not smooth on $P_X$ in general. In addition, for $x \in P_X$, let $T_x < T$ be the common isotropy subgroup of all points in $\mu^{-1}(\{x\})$, i.e.
\begin{align*}
	T_x = \{ \eta \in T: \eta \cdot y = y \text{ for all } y \in \mu^{-1}(\{x\}) \},
\end{align*}
and $Z_x < \mathbb{Z}^n$ be the Pontryagin dual of $T/T_x$, i.e.
\begin{align*}
	Z_x = \{ p \in \mathbb{Z}^n: e^{\sqrt{-1} p \cdot \theta} = 1 \text{ for all } \theta \in T_x \}.
\end{align*}
Then for all $p \in \mathbb{Z}^n \backslash Z_x$, $\widehat{f}_p(x) = 0$. Therefore, we can regard $\widehat{f}$ as a function on $\bigsqcup_{x \in P_X} \{x\} \times Z_x$.\par
Using fibrewise Fourier transform, we obtain a sequence $\{ \mathcal{C}^\infty(X, \mathbb{C})_{\leq l} \}_{l=0}^\infty$ of subspaces of $\mathcal{C}^\infty(X, \mathbb{C})$, where $\mathcal{C}^\infty(X, \mathbb{C})_{\leq l}$ is the set of functions $f \in \mathcal{C}^\infty(X, \mathbb{C})$ such that $\widehat{f}_p$ vanishes everywhere on $P_X$ for all $p = (p_1, ..., p_n) \in \mathbb{Z}^n$ with $\max_{1 \leq i \leq n} \lvert p_i \rvert > l$. Observe that for $i, j \in \mathbb{N} \cup \{0\}$,
\begin{align*}
	\mathcal{C}^\infty(X, \mathbb{C})_{\leq i} \cdot \mathcal{C}^\infty(X, \mathbb{C})_{\leq j} & \subset \mathcal{C}^\infty(X, \mathbb{C})_{\leq (i+j)},\\
	\{ \mathcal{C}^\infty(X, \mathbb{C})_{\leq i}, \mathcal{C}^\infty(X, \mathbb{C})_{\leq j} \} & \subset \mathcal{C}^\infty(X, \mathbb{C})_{\leq (i+j)}.
\end{align*}
Therefore, $\mathcal{C}^\infty(X, \mathbb{C})_{<\infty} := \bigcup_{l=0}^\infty \mathcal{C}^\infty(X, \mathbb{C})_{\leq l}$ is a Poisson subalgebra of $(\mathcal{C}^\infty(X, \mathbb{C}), \{\quad, \quad\})$ and is itself a filtered Poisson algebra with the ascending filtration
\begin{align*}
\mathcal{C}^\infty(X, \mathbb{C})_{\leq 0} \subset \cdots \subset \mathcal{C}^\infty(X, \mathbb{C})_{\leq l} \subset \cdots \subset \mathcal{C}^\infty(X, \mathbb{C})_{<\infty}.
\end{align*}
We end this section by stating estimates of norms of fibrewise Fourier cooefficients of functions on $X$ and their derivatives, which will be used in the proof of our main theorem in Section \ref{Section 5}. For $m \in \mathbb{Z}^n$, define
\begin{equation}
\label{Equation 5.2}
[m] = \prod_{m_i \neq 0} (2\pi \lvert m_i \rvert).
\end{equation}

\begin{lemma}
	\label{Lemma 5.3}
	Let $f \in \mathcal{C}^\infty(X, \mathbb{C})$, $I$ be any multi-index and $r \in \mathbb{N}$. The following statements hold.
	\begin{enumerate}
		\item There exists $C_{f, I, r} > 0$ such that for all $m \in \mathbb{Z}^n$ and $x \in \mathring{P}_X$, $\left\lvert m^I \widehat{f}_m(x) \right\vert \leq C_{f, I, r} [m]^{-r}$.
		\item If $K$ is a compact subset of $\mathring{P}_X$, then there exists $C_{f, I, r, K} > 0$ such that for all $m \in \mathbb{Z}^n$ and $x \in K$, $\left\lvert \widehat{f}_m^{(I)}(x) \right\vert \leq C_{f, I, r, K} [m]^{-r}$, where $\widehat{f}_m^{(I)}(x) = \tfrac{\partial^{\lvert I \rvert}}{\partial x^I} \widehat{f}_m(x)$.
	\end{enumerate}
\end{lemma}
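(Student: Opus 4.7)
The strategy is the classical Fourier-decay argument: integrate by parts in the torus variable $\theta$. Since $T$ has no boundary and $\theta \mapsto (\operatorname{Bl}_X^* f)(x, \theta)$ is smooth on $T$ for each fixed $x$, iterated integration by parts gives, for any multi-index $J$,
\begin{equation*}
m^J \widehat{f}_m(x) = \frac{(-\sqrt{-1})^{|J|}}{(2\pi)^n} \int_T \partial_\theta^J(\operatorname{Bl}_X^* f)(x, \theta) \, e^{-\sqrt{-1} m \cdot \theta} \, d^n\theta.
\end{equation*}
The function $\partial_\theta^J(\operatorname{Bl}_X^* f)$ coincides with $\operatorname{Bl}_X^*(\partial_\theta^J f)$, where $\partial_\theta^J f$ denotes iterated Lie derivatives along the smooth vector fields on $X$ generating the $T$-action; as the pullback by the continuous map $\operatorname{Bl}_X$ of a smooth function on $X$, it is continuous on the compact space $P_X \times T$. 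Consequently one obtains the uniform bound $|m^J \widehat{f}_m(x)| \leq C_{f, J}$ for all $x \in P_X$ and $m \in \mathbb{Z}^n$.

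To convert this into the $[m]^{-r}$ decay of (1), note that $[m] = \prod_{m_i \neq 0}(2\pi|m_i|)$ involves only the nonzero components of $m$, so no single multi-index $J$ can supply all the needed factors uniformly in $m$. I would therefore index over subsets $S \subseteq \{1, \dots, n\}$ and take $J_S$ with $(J_S)_i = I_i + r$ for $i \in S$ and $(J_S)_i = I_i$ for $i \notin S$. The uniform bound above gives
\begin{equation*}
|m^I \widehat{f}_m(x)| \cdot \prod_{i \in S}|m_i|^r = |m^{J_S} \widehat{f}_m(x)| \leq C_{f, I, r, S}.
\end{equation*}
Given any $m$, choose $S = S(m) := \{i : m_i \neq 0\}$: if $I_i > 0$ for some $i \notin S(m)$ then $m^I = 0$ trivially, and otherwise the bound rearranges to $|m^I \widehat{f}_m(x)| \leq (2\pi)^{-r|S(m)|} C_{f, I, r, S(m)}\, [m]^{-r}$. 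Taking the maximum of these $2^n$ constants yields the constant $C_{f, I, r}$ claimed in (1).

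For (2), the essential point is that $\operatorname{Bl}_X$ restricts to a diffeomorphism on $\mathring{P}_X \times T$, so $\operatorname{Bl}_X^* f$ is jointly smooth in $(x, \theta)$ there. Differentiating under the integral sign and then performing the same integration by parts in $\theta$ yields
\begin{equation*}
m^{J_S} \widehat{f}_m^{(I)}(x) = \frac{(-\sqrt{-1})^{|J_S|}}{(2\pi)^n} \int_T \partial_\theta^{J_S}\partial_x^I(\operatorname{Bl}_X^*f)(x, \theta) \, e^{-\sqrt{-1} m \cdot \theta} \, d^n\theta,
\end{equation*}
whose modulus is bounded by $\sup_{K \times T}|\partial_\theta^{J_S}\partial_x^I(\operatorname{Bl}_X^*f)|$, a finite constant since $K \times T$ is a compact subset of the smooth locus $\mathring{P}_X \times T$. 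The same subset-indexed argument as in (1) then delivers the desired bound uniformly in $x \in K$.

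The only real subtlety, more than a true obstacle, is justifying continuity of $\partial_\theta^J(\operatorname{Bl}_X^* f)$ up to the boundary of $P_X \times T$ in (1), which is what makes the uniform-in-$x$ bound work on all of $\mathring{P}_X$. This is fine because the torus action on $X$ is globally smooth even though the action-angle coordinates themselves degenerate along $\partial P_X$; that same degeneracy is precisely what forces $x$-derivatives in (2) to be restricted to a compact $K \subset \mathring{P}_X$.
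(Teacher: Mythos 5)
Your proof is correct and takes essentially the same route as the paper's, which reduces to the standard Fourier-decay argument (citing the symplectic-torus case of \cite{LY2021}) using exactly your two key observations: $\theta$-derivatives of $f$ are globally smooth on the compact manifold $X$, giving bounds uniform over all of $\mathring{P}_X$ in (1), while $x$-derivatives are only controlled on the compact set $\mu^{-1}(K)$, which is why (2) requires $K \subset \mathring{P}_X$. Two cosmetic slips that do not affect the argument: the $2\pi$-factor should be $(2\pi)^{r\lvert S(m)\rvert}$ rather than a negative power, and in (2) the $\theta$-multi-index should simply have entries $r$ on $S(m)$ and $0$ elsewhere (omitting $I$, which now acts in the $x$-variables), so that $m^{J_S}$ never vanishes and no degenerate case arises.
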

\begin{proof}
	The proof is the same as that of Lemma 5.3 in \cite{LY2021} for symplectic tori vertibum (see also \cite{NS1991, SS2002}). We only need to note that $\tfrac{\partial^{\lvert I \rvert} f}{\partial \theta^I}$ is a globally defined smooth function on the compact manifold $X$ whose $m$th fibrewise Fourier coefficient of $\tfrac{\partial^{\lvert I \rvert} f}{\partial \theta^I}$ is $(\sqrt{-1})^{\lvert I \rvert} m^I \hat{f}_m$ for each $m \in \mathbb{Z}^n$, whereas $\tfrac{\partial^{\lvert I \rvert} f}{\partial x^I}$ is a smooth function on the compact subset $\mu^{-1}(K)$ of $\mathring{X}$ whose $m$th fibrewise Fourier coefficient of $\widehat{\frac{\partial^{\lvert I \rvert} f}{\partial x^I}}$ is $\widehat{f}_m^{(I)}$ for each $m \in \mathbb{Z}^n$.
\end{proof}

\subsection{Real polarizations}
\quad\par
In this subsection, we shall introduce two singular real polarizations $\mathcal{P}^{\operatorname{v}}$ and $\mathcal{P}^{\operatorname{h}}$ that will be used in the construction of Toeplitz-type operators in Section \ref{Section 5}.\par
The $T$-action on $(X, \omega)$ induces a morphism of Lie algebroids
\begin{align*}
\chi: X \times \mathfrak{t} \to TX, \quad (p, \xi) \mapsto \chi_\xi(p),
\end{align*}
where $\chi_\xi$ is the fundamental vector field of $\xi$, from the Lie algebra bundle $X \times \mathfrak{t}$ to the tangent bundle $TX$. We then have a sequence of vector bundle maps
\begin{center}
	\begin{tikzcd}
		X \times \mathfrak{t} \ar[r, "\chi"] & TX \ar[r, "d\mu"] & \mu^*TP_X
	\end{tikzcd}
\end{center}
and for any point $p \in X$, $\chi(\{p\} \times \mathfrak{t})$ and $\ker d\mu(p)$ are isotropic and coisotropic subspaces of $T_pX_\mathbb{C}$ respectively such that $\chi(\{p\} \times \mathfrak{t}) \subset \ker d\mu(p)$. 
When restricted on $\mathring{X}$, the above sequence is a short exact sequence of vector bundles over $\mathring{X}$.\par
There are two conventions on defining the real toric polarization $\mathcal{P}^{\operatorname{v}}$. In \cite{BFMN2011}, $\mathcal{P}^{\operatorname{v}}$ is defined to be the kernel $(\ker d\mu)_\mathbb{C}$; in \cite{KMN2013}, $\mathcal{P}^{\operatorname{v}}$ is defined to be the image of $X \times \mathfrak{t}_\mathbb{C}$ under the complexification $\chi_\mathbb{C}$ of $\chi$. Indeed, this is a minor issue. The choice of conventions makes no essential difference for the construction of Toeplitz-type operators in Section \ref{Section 5} - what is more important is the space of smooth sections of $\mathcal{P}^{\operatorname{v}}$. 
We adopt the latter convention that $\mathcal{P}^{\operatorname{v}}$ is given by the image of $X \times \mathfrak{t}_\mathbb{C}$ under $\chi_\mathbb{C}$, i.e.
\begin{align*}
\mathcal{P}_p^{\operatorname{v}} = \operatorname{span}_\mathbb{C} \left\{ \left. \partial_{\theta^1} \right\vert_p, ..., \left. \partial_{\theta^n} \right\vert_p \right\}, \quad \text{for all } p \in X.
\end{align*}
By a smooth section of $\mathcal{P}^{\operatorname{v}}$ over an open subset $U$ of $X$, we mean a smooth section $\xi$ of $TX_\mathbb{C}$ over $U$ such that for all $p \in U$, $\xi_p \in \mathcal{P}_p^{\operatorname{v}}$. We denote by $\Gamma(U, \mathcal{P}^{\operatorname{v}})$ the space of smooth sections of $\mathcal{P}^{\operatorname{v}}$ over $U$. Then $\Gamma(U, \mathcal{P}^{\operatorname{v}})$ is closed under Lie bracket $[\quad, \quad]$.\par
While the real polarization $\mathcal{P}^{\operatorname{v}}$ is evidently independent of the choice of action-angle coordinates, the chosen action-angle coordinates $(x, \theta)$ induces a real polarization $\mathcal{P}^{\operatorname{h}}$ on $(\mathring{X}, \omega \vert_{\mathring{X}})$ given by
\begin{align*}
\mathcal{P}_p^{\operatorname{h}} = \operatorname{span}_\mathbb{C} \left\{ \left. \partial_{x^1} \right\vert_p, ..., \left. \partial_{x^n} \right\vert_p \right\}, \quad \text{for all } p \in \mathring{X}.
\end{align*}
Note that $\mathcal{P}^{\operatorname{h}}$ is transversal to $\mathcal{P}^{\operatorname{v}} \vert_{\mathring{X}}$. As $\mathcal{P}^{\operatorname{h}}$ is only defined on the open dense subset $\mathring{X}$ of $X$, we consider it as a singular polarization on $X$.

\section{Deformation Quantization Compatible with Real Polarizations}
\label{Section 3}
In this section, we shall define star products on $(X, \omega)$ that appear in our main theorem.\par 
From the observation in the case of symplectic tori \cite{LY2021}, for the purpose of compatibility with $\mathcal{P}^{\operatorname{v}}$ and $\mathcal{P}^{\operatorname{h}}$, we might expect a star product $\star$ on $(X, \omega)$ of the form
\begin{equation}
\label{Equation 2.4}
f \star g = \sum_{r=0}^\infty \hbar^r C_r(f, g),
\end{equation}
where for any $r \in \mathbb{N} \cup \{0\}$,
\begin{equation}
\label{Equation 2.5}
C_r(f, g) = \frac{1}{r! \cdot (\sqrt{-1})^r} \sum_{i_1=1}^n \cdots \sum_{i_r=1}^n \frac{\partial^r f}{\partial x^{i_1} \cdots \partial x^{i_r}} \frac{\partial^r g}{\partial \theta^{i_1} \cdots \partial \theta^{i_r}} = \frac{1}{(\sqrt{-1})^r} \sum_{\lvert I \rvert = r} \frac{1}{I!} \frac{\partial^r f}{\partial x^I} \frac{\partial^r g}{\partial \theta^I}.
\end{equation}
with respect to the action-angle coordinates $(x, \theta)$ on $\mathring{X}$. This is well-defined on $(\mathring{X}, \omega \vert_{\mathring{X}})$ and for any vertex $v \in \operatorname{Vert}(P_X)$, as we see from (\ref{Equation 3.1}) that as $A_v, \lambda_v$ are constants, we can rewrite $C_r(f, g)$ ($r \in \mathbb{N} \cup \{0\}$) in terms of the vertex coordinate chart $(x_v, \theta_v)$:
\begin{align*}
C_r(f, g) = \frac{1}{(\sqrt{-1})^r} \sum_{\lvert I \rvert = r} \frac{1}{I!} \frac{\partial^r f}{\partial x_v^I} \frac{\partial^r g}{\partial \theta_v^I}.
\end{align*}
However, the limit of $C_r(f, g)$ might not exist as the point approaches $X \backslash \mathring{X}$ because of singularity of $\tfrac{\partial^r f}{\partial x_v^I}$. Note that $\tfrac{\partial^r g}{\partial \theta_v^I}$ is well defined even on $X \backslash \mathring{X}$.

For the purpose of defining a Toeplitz-type operator in real toric polarization, it turns out that it is enough to only define the star product on $\mathring{X}$. This will be explained in Section \ref{Section 5}. Now we give the following definition.

\begin{definition}
	Let $\hbar$ be a formal variable. We define a $\mathbb{C}[[\hbar]]$-bilinear map
	\begin{align*}
		\star: \mathcal{C}^\infty(X, \mathbb{C})[[\hbar]] \times \mathcal{C}^\infty(X, \mathbb{C})[[\hbar]] \to \mathcal{C}^\infty(\mathring{X}, \mathbb{C})[[\hbar]], \quad (f, g) \mapsto f \star g
	\end{align*}
	as follows: for all $f, g \in \mathcal{C}^\infty(X, \mathbb{C})$,
	\begin{equation*}
	f \star g = \sum_{r=0}^\infty \hbar^r C_r(f, g),
	\end{equation*}
	where $C_r(f, g)$'s are defined as in (\ref{Equation 2.5}). We call $\star$ the \emph{(singular) toric star product}.
\end{definition}
Similar to $\mathcal{C}^\infty(X, \mathbb{C})$, we can define $\mathcal{C}^\infty(\mathring{X}, \mathbb{C})_{\leq l}$ for each $l \in \mathbb{N} \cup \{0\}$ and $\mathcal{C}^\infty(\mathring{X}, \mathbb{C})_{<\infty}$ via fibrewise Fourier transform. The toric star product $\star$ has the property that for $i, j \in \mathbb{N} \cup \{0\}$,
\begin{align*}
	\mathcal{C}^\infty(X, \mathbb{C})_{\leq i}[[\hbar]] \star \mathcal{C}^\infty(X, \mathbb{C})_{\leq j}[[\hbar]] \subset \mathcal{C}^\infty(\mathring{X}, \mathbb{C})_{\leq (i+j)}[[\hbar]].
\end{align*}

\section{Geometric Quantization}
\label{Section 4}
In this section, we shall perform geometric quantization on $(X, \omega)$ in real polarization $\mathcal{P}^{\operatorname{v}}$. In order to establish relations between deformation quantization and geometric quantization in later discussion, we need to consider scaling of the symplectic form $\omega$ and study the behavior of quantization near the large volume limit. Now, instead of regarding $\hbar$ as a formal variable, we let $k \in \mathbb{N}$ and $\hbar = \tfrac{1}{k}$.\par
We know from \cite{BFMN2011} that the usual way of performing geometric quantization directly on a prequantum line bundle on $(X, \omega)$ requires the choice of $\lambda_F$ to be integers and there yields Bohr-Sommerfeld fibres over boundary lattice points of the moment polytope $P_X$. A drawback of this is when we mimic the construction of Toeplitz-type operators in real polarizations as in \cite{LY2021} and study norm estimations, we need to handle ill-defined values of the terms $C_r(f, g)$ in (\ref{Equation 2.5}) at points in $X \backslash \mathring{X}$.\par
To avoid this technical problem, we instead adopt \emph{half-form correction} (also known as \emph{metaplectic correction}), as established in \cite{KMN2013}. We shall first give a brief review on the general procedures for geometric quantization with half-form correction, and then state clearly the assumptions to be made for our quantization scheme. For illustration, suppose $[\tfrac{\omega}{2\pi}] = c_1(L) \in H^2(X, \mathbb{Z})$ with $(L, h^L, \nabla)$ a prequantum line bundle, $\mathcal{P}$ is a non-singular polarization on $(X, \omega)$, and $\tfrac{1}{2} c_1(X) \in H^2(X, \mathbb{Z})$, or equivalently, $(X, \omega)$ admits a metaplectic structure. Recall that the isomorphism classes of metaplectic structures on $(X, \omega)$ are classified by $H^1(X, \mathbb{Z}_2)$, which is trivial for toric varieties. Thus, we can pick the unique metaplectic structure.\par
As $\mathcal{P}$ is a complex Lagrangian distribution on $(X, \omega)$, it corresponds to a complex line bundle $K^\mathcal{P}$ defined by
\begin{equation}
\label{Equation 4.1}
K_p^\mathcal{P} = \left\{ \alpha \in \textstyle\bigwedge^n T_p^*X_\mathbb{C}: \iota_{\overline{\xi}} \alpha = 0 \text{ for any } \xi \in \mathcal{P}_p \right\},
\end{equation}
for any $p \in X$. We call $K^\mathcal{P}$ the \emph{canonical line bundle} of $\mathcal{P}$. It has a canonical flat $\overline{\mathcal{P}}$-connection $\nabla^{K^\mathcal{P}}$ induced by the Bott connection on the quotient vector bundle $TX_\mathbb{C}/\overline{\mathcal{P}}$. The metaplectic structure induces a square root $(\sqrt{K^\mathcal{P}}, \nabla^{\sqrt{K^\mathcal{P}}})$ of $(K^\mathcal{P}, \nabla^{K^\mathcal{P}})$. The half-form corrected prequantum line bundle (in level $k$) is the complex line bundle
\begin{align*}
L^{\otimes k} \otimes \sqrt{K^\mathcal{P}}
\end{align*}
together with the $\overline{\mathcal{P}}$-connection $\nabla^{L^{\otimes k} \otimes \sqrt{K^\mathcal{P}}}$ induced by $\nabla$ and $\nabla^{\sqrt{K^\mathcal{P}}}$.\par
As explained in Section 2.1 in \cite{KMN2013}, every complex line bundle with connection $(E, \nabla^E)$ is canonically isomorphic to the tensor product of its \emph{$\operatorname{U}(1)$-part} $(E^{\operatorname{U}(1)}, \nabla^{E^{\operatorname{U}}})$, which has $\operatorname{U}(1)$-transition functions and curvature $2$-form lying in $\Omega^1(X, \sqrt{-1}\mathbb{R})$, and its \emph{modulus part} $(\lvert E \rvert, \nabla^{\lvert E \rvert})$, which has $\mathbb{R}^+$-transition functions and curvature $2$-form lying in $\Omega^1(X, \mathbb{R})$. Similar, we decompose
\begin{align*}
(\sqrt{K^\mathcal{P}}, \nabla^{\sqrt{K^\mathcal{P}}}) \cong ((\sqrt{K^\mathcal{P}})^{\operatorname{U}(1)}, \nabla^{(\sqrt{K^\mathcal{P}})^{\operatorname{U}(1)}}) \otimes (\sqrt{\lvert K^\mathcal{P} \rvert}, \nabla^{\sqrt{\lvert K^\mathcal{P} \rvert}})
\end{align*}
into the $\operatorname{U}(1)$-part $((\sqrt{K^\mathcal{P}})^{\operatorname{U}(1)}, \nabla^{(\sqrt{K^\mathcal{P}})^{\operatorname{U}(1)}})$ and its modulus part $(\sqrt{\lvert K^\mathcal{P} \rvert}, \nabla^{\sqrt{\lvert K^\mathcal{P} \rvert}})$, with the notion `connection' replaced by `$\overline{\mathcal{P}}$-connection'. 
Note that $c_1(\sqrt{\lvert K^\mathcal{P} \rvert}) = 0$ and it is computed in Section 3.1 in \cite{KMN2013} that when $\mathcal{P}$ is the K\"ahler polarization induced by a $T$-invariant complex structure on $X$,
\begin{align*}
c_1(L^{\otimes k} \otimes (\sqrt{K^\mathcal{P}})^{\operatorname{U}(1)}) = k[\tfrac{\omega}{2\pi}] - \tfrac{1}{2} c_1(X).
\end{align*}
\par
Indeed, there is no need to restrict to the case when the first Chern class $c_1(X)$ of $(X, \omega)$ is even. Here and in the squeal, we only assume $[\tfrac{\omega}{2\pi}] - \tfrac{1}{2} c_1(X) \in H^2(X, \mathbb{Z})$. Then $k[\tfrac{\omega}{2\pi}] \in H^2(X, \mathbb{Z})$ for $k$ even, therefore $k[\tfrac{\omega}{2\pi}] - \tfrac{1}{2} c_1(X) \in H^2(X, \mathbb{Z})$ for $k$ odd. Also note that there exists $k_0 \in \mathbb{N}$ such that for all $k \in \mathbb{N}$ with $k \geq k_0$, $k[\tfrac{\omega}{2\pi}] - \tfrac{1}{2} c_1(X)$ is a positive class (with respect to a $T$-invariant complex structure determined by a symplectic potential). Thus, we furthermore assume $k$ is odd and $k \geq k_0$. In the coming subsection, we directly construct a Hermitian line bundle $(L_+^k, h^{L_+^k})$ with $\overline{\mathcal{P}}$-connection playing the role of $L^{\otimes k} \otimes (\sqrt{K^\mathcal{P}})^{\operatorname{U}(1)}$ without defining $L$ and $(\sqrt{K^\mathcal{P}})^{\operatorname{U}(1)}$ separately and explain the meaning of $\sqrt{\lvert K^\mathcal{P} \rvert}$ when $\mathcal{P} = \mathcal{P}^{\operatorname{v}}$ is a singular polarization.

\subsection{The half-form corrected prequantum line bundle}
\quad\par
We first explain the meaning of $\sqrt{\lvert K^{\mathcal{P}^{\operatorname{v}}} \rvert}$. Since $\mathcal{P}^{\operatorname{v}}$ is singular on $X \backslash \mathring{X}$, we cannot have an honest complex line bundle $K^{\mathcal{P}^{\operatorname{v}}}$ serving as the canonical line bundle of $\mathcal{P}^{\operatorname{v}}$. However, we have a globally defined $n$-form $d^nx = dx^1 \wedge \cdots \wedge dx^n$ on $X$ with zero locus $X \backslash \mathring{X}$, 
which annihilates $\overline{\mathcal{P}^{\operatorname{v}}} = \mathcal{P}^{\operatorname{v}}$. If $p \in X$ and $\eta \in \bigwedge^n T_p^*X_\mathbb{C}$, we define $\sqrt{\lvert \eta \rvert}$ by the map
\begin{align*}
\textstyle\bigwedge^n T_pX  \to \mathbb{R}, \quad (\xi_1, ..., \xi_n) \mapsto \sqrt{\lvert \eta(\xi_1, ..., \xi_n) \rvert}.
\end{align*}
In particular, we have a map $\sqrt{\lvert d^nx \rvert}: \mathcal{X}(X)^n \to \mathcal{C}^\infty(X)$. Although the complex line bundle $\sqrt{\lvert K^{\mathcal{P}^{\operatorname{v}}} \rvert}$ is ill-defined, it suffices to make sense of its global smooth sections, and we regard them as elements in $\mathcal{C}^\infty(X, \mathbb{C}) \sqrt{\lvert d^n x\rvert}$, where we suppress $\otimes$ in $\mathcal{C}^\infty(X, \mathbb{C}) \otimes \sqrt{\lvert d^nx \rvert}$ and we shall keep on doing so for similar notations in the sequel.\par
There is no need to define the $\operatorname{U}(1)$-part $(\sqrt{K^{\mathcal{P}^{\operatorname{v}}}})^{\operatorname{U}(1)}$ as it will be absorbed in the line bundle $L_+^k$ to be defined now. By the assumption that $[\tfrac{\omega}{2\pi}] - \tfrac{1}{2} c_1(X) \in H^2(X, \mathbb{Z})$, we fix the choice of $\lambda_F$'s so that $\lambda_F \in \mathbb{Z} + \tfrac{1}{2}$ for any $1$-cone $F \in \Sigma^{(1)}$. Define a map $\Sigma^{(1)} \to \mathbb{Z}$ by $F \mapsto \lambda_F^{L_+^k} := k\lambda_F - \tfrac{1}{2}$ (note that $\lambda_F^{L_+^k} \in \mathbb{Z}$ since $k$ is assumed to be odd). Then $k[\tfrac{\omega}{2\pi}] - \frac{1}{2} c_1(X) = \sum_{F \in \Sigma^{(1)}} \lambda_F^{L_+^k} D_F$ by (\ref{Equation 2.3}). We construct a $T$-equivariant Hermitian line bundle $(L_+^k, h^{L_+^k})$ associated to $\{ \lambda_F^{L_+^k} \}_{F \in \Sigma^{(1)}}$ following the procedure described in Subsection \ref{Subsection 2.2}. 
While the $T$-equivariant line bundle $L_+^k$ corresponds to a Delzant polytope $P_{L_+^k}$,
\begin{equation*}
P_{X, \hbar} := kP_X = \{ x \in \mathfrak{t}^*: \text{for all } F \in \Sigma^{(1)}, l_{F, \hbar}(x) := \langle x, \nu_F \rangle + k\lambda_F \geq 0 \}
\end{equation*}
is the moment polytope for the moment map $\mu_\hbar := k\mu: X \to \mathfrak{t}^*$ with respect to the symplectic form $k\omega$. We see that $P_{L_+^k}$ is contained in the interior of $P_{X, \hbar}$ and the set $\Lambda_\hbar = P_{L_+^k} \cap \mathbb{Z}^n$ of lattice points in $P_{L_+^k}$ coincides with the set of lattice points in $P_{X, \hbar}$.\par
As a result, the ($k$-level) half-form corrected prequantum line bundle in polarization $\mathcal{P}^{\operatorname{v}}$ is $L_+^k \otimes \sqrt{\lvert K^{\mathcal{P}^{\operatorname{v}}} \rvert}$. Again, this is not an honestly defined line bundle, but it is enough for us to define its global smooth sections, which are regarded as elements in $\Gamma(X, L_+^k) \sqrt{\lvert d^n x \rvert}$.

\subsection{Polarized sections and quantum Hilbert spaces}
\quad\par
To form the quantum Hilbert space, in principle we need to take $\mathcal{P}^{\operatorname{v}}$-polarized sections of $L_+^k \otimes \sqrt{\lvert K^{\mathcal{P}^{\operatorname{v}}} \rvert}$. Hence, we need to define flat $\overline{\mathcal{P}^{\operatorname{v}}}$-connections on $L_+^k$ and $\sqrt{\lvert K^{\mathcal{P}^{\operatorname{v}}} \rvert}$.\par
First, we have a canonical flat $\overline{\mathcal{P}^{\operatorname{v}}}$-connection $\nabla^{\sqrt{\lvert K^{\mathcal{P}^{\operatorname{v}}} \rvert}}$ on $\sqrt{\lvert K^{\mathcal{P}^{\operatorname{v}}} \rvert}$,
\begin{align*}
\Gamma(X, \overline{\mathcal{P}^{\operatorname{v}}}) \times (\mathcal{C}^\infty(X, \mathbb{C}) \sqrt{\lvert d^nx \rvert}) & \to \mathcal{C}^\infty(X, \mathbb{C}) \sqrt{\lvert d^nx \rvert},\\
(\xi, f \sqrt{\lvert d^n x\rvert}) & \mapsto \nabla_\xi^{\sqrt{\lvert K^{\mathcal{P}^{\operatorname{v}}} \rvert}} (f \sqrt{\lvert d^n x\rvert}) := (\mathcal{L}_\xi f) \sqrt{\lvert d^n x\rvert}.
\end{align*}
Also, $\sqrt{\lvert d^n x\rvert}$ is $\nabla^{\sqrt{\lvert K^{\mathcal{P}^{\operatorname{v}}} \rvert}}$-\emph{flat} in the sense that for any $\xi \in \Gamma(X, \mathcal{P}^{\operatorname{v}})$, $\nabla_{\overline{\xi}}^{\sqrt{\lvert K^{\mathcal{P}^{\operatorname{v}}} \rvert}} \sqrt{\lvert d^n x\rvert} = 0$.\par
Second, for any section $\tau \in \Gamma(X, L_+^k)$ and $v \in \overline{\operatorname{Vert}}(P_X)$, we define the function $\tau_v$ on $U_v$ by
\begin{equation}
	\label{Equation 4.2}
	\tau \vert_{U_v} = \tau_v \mathbf{1}_{v, L_+^k}^{\operatorname{U}(1)}.
\end{equation}
Justified by \cite{KMN2013}, we introduce a flat $\overline{\mathcal{P}^{\operatorname{v}}}$-connection on $L_+^k$
\begin{align*}
	\nabla^\hbar: \Gamma(X, \overline{\mathcal{P}^{\operatorname{v}}}) \times \Gamma(X, L_+^k) \to \Gamma(X, L_+^k), \quad (\xi, s) \mapsto \nabla_\xi^\hbar s,
\end{align*}
which is determined by the following equalities: for $i \in \{1, ..., n\}$, $\tau \in \Gamma(X, L_+^k)$ and $v \in \operatorname{Vert}(P_X)$,
\begin{align}
\left. \nabla_{\partial_{\theta^i}}^\hbar \tau \right\vert_{\mathring{X}} = & \left( \frac{\partial \tau_\circ}{\partial \theta^i} - \sqrt{-1} k x^i \tau_\circ \right) \mathbf{1}_{\circ, L_+^k}^{\operatorname{U}(1)},\\
\left. \nabla_{\partial_{\theta_v^i}}^\hbar \tau \right\vert_{U_v} = & \left( \frac{\partial \tau_v}{\partial \theta_v^i} - \sqrt{-1} k x_v^i \tau_v + \frac{\sqrt{-1}}{2} \tau_v \right) \mathbf{1}_{v, L_+^k}^{\operatorname{U}(1)}.
\end{align}
As we deal with real polarization $\mathcal{P}^{\operatorname{v}}$, we need to consider distributional sections of $L_+^k$. For any open subset $U$ of $X$, $s \in \Gamma(U, \overline{L}_+^k)'$ and $\tau \in \Gamma(U, L_+^k)$, we define $\langle s, \tau \rangle = s(\overline{\tau})$. 
We also embed $\Gamma(U, L_+^k)$ into $\Gamma(U, \overline{L}_+^k)'$ via the Hermitian metric $h^{L_+^k}$ and the Liouville measure. Then the flat $\overline{\mathcal{P}^{\operatorname{v}}}$-connection $\nabla^\hbar$ extends to act on distributional sections of $L_+^k$. By a \emph{$\mathcal{P}^{\operatorname{v}}$-polarized (distributional) section} of $L_+^k$, we mean a distributional section $s \in \Gamma(X, \overline{L}_+^k)'$ such that $\nabla_{\overline{\xi}}^\hbar s = 0$ for any $\xi \in \Gamma(X, \mathcal{P}^{\operatorname{v}})$.\par
The above discussions motivate the following definition (Definition 4.3 in \cite{KMN2013}).
\begin{definition}
	The \emph{vector space of quantum states for the half-form corrected quantization in the real singular toric polarization} $\mathcal{P}^{\operatorname{v}}$ is defined as $\mathcal{H}^\hbar = \mathcal{B}^\hbar \otimes \sqrt{\lvert d^nx \rvert}$, where $\mathcal{B}^\hbar$ is the space of $\mathcal{P}^{\operatorname{v}}$-polarized distributional sections of $L_+^k$.
\end{definition}
By Theorem 4.5 in \cite{KMN2013},
\begin{equation}
	\mathcal{H}^\hbar = \bigoplus_{m \in \Lambda_\hbar} \mathbb{C} \delta_\hbar^m \sqrt{\lvert d^n x \rvert},
\end{equation}
where for all $m \in \Lambda_\hbar$, $\delta_\hbar^m \in \Gamma(X, \overline{L}_+^k)'$ is the distributional section of $L_+^k$ defined by
\begin{equation}
\langle \delta_\hbar^m, \tau \rangle = \frac{1}{(2\pi)^n} \int_T e^{\sqrt{-1} m \cdot \theta} \overline{\tau}_\circ(\hbar m, \theta) d^n\theta,
\end{equation}
where $\tau_\circ$ is defined in (\ref{Equation 4.2}), for any $\tau \in \Gamma(X, L_+^k)$.  
As explained in Remark 4.14 in \cite{KMN2013}, we define an inner product on $\mathcal{H}^\hbar$ by declaring
\begin{equation*}
\{ \sigma_\hbar^m := (4\pi)^{\frac{n}{4}} \delta_\hbar^m \sqrt{\lvert d^nx \rvert} \}_{m \in \Lambda_\hbar}
\end{equation*}
to be an orthonormal basis of $\mathcal{H}^\hbar$.

\section{Berezin-Toeplitz Quantization in Real Polarizations}
\label{Section 5}
This is the main section of this paper. We continue to assume that $[\tfrac{\omega}{2\pi}] - \tfrac{1}{2} c_1(X) \in H^2(X, \mathbb{Z})$, $k \in \mathbb{N}$ is odd and $k \geq k_0$. In Subsection \ref{Subsection 5.1}, after explanation of ingredients involved in the definition, we shall define a Toeplitz-type operator
\begin{align*}
Q^\hbar: \mathcal{C}^\infty(X, \mathbb{C}) \times \mathcal{H}^\hbar \to \mathcal{H}^\hbar, \quad (f, s) \mapsto Q_f^\hbar s
\end{align*}
acting on the quantum Hilbert space $\mathcal{H}^\hbar$ in the singular real polarization $\mathcal{P}^{\operatorname{v}}$. In Subsection \ref{Subsection 5.2}, we describe \emph{locality} of the Toeplitz-type operators as so to justify the meaning of the norm estimations on their localization. Finally in Subsection \ref{Subsection 5.3}, we provide proofs of our main theorem.

\subsection{Construction of Toeplitz operators for real polarizations}
\label{Subsection 5.1}
\quad\par
Observe that for $f \in \mathcal{C}^\infty(X, \mathbb{C})$ and $s \in \mathcal{H}^\hbar$, $fs \in \Gamma(\mathring{X}, \overline{L}_+^k)' \sqrt{\lvert d^n x \rvert}$. We are motivated by the case of symplectic tori \cite{LY2021} that in order to define Toeplitz-type operators, we need to construct a projection map
\begin{align*}
	\Pi^\hbar: \Gamma(\mathring{X}, \overline{L}_+^k)' \sqrt{\lvert d^n x \rvert} \to \mathcal{H}^\hbar,
\end{align*}
and the construction requires a vector space $\check{\mathcal{H}}^\hbar$ that is expected to be the quantum Hilbert space in the real polarization $\mathcal{P}^{\operatorname{h}}$ which is transversal to $\mathcal{P}^{\operatorname{v}}$, and a pairing between $\Gamma(\mathring{X}, \overline{L}_+^k)' \otimes \sqrt{\lvert d^n x \rvert}$ and $\check{\mathcal{H}}^\hbar$ generalizing the \emph{Blattner-Kostant-Sternberg (BKS) pairing}.\par
Now we perform geometric quantization on $\mathring{X}$ in real polarization $\mathcal{P}^{\operatorname{h}}$. The canonical line bundle $K^{\mathcal{P}^{\operatorname{h}}}$ of $\mathcal{P}^{\operatorname{h}}$ is well defined on $\mathring{X}$ via (\ref{Equation 4.1}), with a global trivializing section $d^n\theta = d\theta^1 \wedge \cdots \wedge d\theta^n$ which is flat with respect to the canonical flat $\overline{\mathcal{P}^{\operatorname{h}}}$-connection on $K^{\mathcal{P}^{\operatorname{h}}}$. We define the complex line bundle $\sqrt{\lvert K^{\mathcal{P}^{\operatorname{h}}} \rvert}$ over $\mathring{X}$ with flat $\overline{\mathcal{P}^{\operatorname{h}}}$-connection by declaring that $\sqrt{\lvert d^n \theta \rvert}$ is a flat global trivializing section of it. 
The flat $\overline{\mathcal{P}^{\operatorname{v}}}$-connection $\nabla^\hbar$ restricted on $\mathring{X}$, extends to an ordinary Hermitian connection on $(L_+^k \vert_{\mathring{X}}, h^{L_+^k} \vert_{\mathring{X}})$, which is denoted by the same symbol, with connection $1$-form $-\sqrt{-1} kx \cdot d\theta$ associated to the local trivializing section $\mathbf{1}_{\circ, L_+^k}^{\operatorname{U}(1)}$ and curvature $2$-form $-\sqrt{-1}k\omega$. 
Note that $\tau \in \Gamma(\mathring{X}, L_+^k)$ is $\mathcal{P}^{\operatorname{h}}$-polarized, i.e. $\nabla_{\overline{\xi}}^\hbar \tau = 0$ for all $\xi \in \Gamma(\mathring{X}, \overline{\mathcal{P}^{\operatorname{h}}})$, if and only if $\tau_\circ$ is the pullback of a smooth function on $T$. The $k$-level half-form corrected prequantum line bundle in polarization $\mathcal{P}^{\operatorname{h}}$ is then the line bundle $L_+^k \vert_{\mathring{X}} \otimes \sqrt{\lvert K^{\mathcal{P}^{\operatorname{h}}} \rvert}$ over $\mathring{X}$.\par
Next we construct a pairing between $\Gamma(\mathring{X}, \overline{L}_+^k)' \sqrt{\lvert d^n x \rvert}$ and $\Gamma(\mathring{X}, L_+^k) \sqrt{\lvert d^n \theta \rvert}$. Since there is a natural pairing between $\Gamma(\mathring{X}, \overline{L}_+^k)'$ and $\Gamma(\mathring{X}, L_+^k)$, it suffices to know how to pair up $\sqrt{\lvert d^nx \rvert}$ and $\sqrt{\lvert d^n \theta \rvert}$. The construction of the BKS pairing often requires the indefinite pairing on the space of $n$-forms on each open subset $U$ of $X$ given by
\begin{equation}
\langle \alpha, \beta \rangle = \frac{n! \cdot \alpha \wedge \overline{\beta}}{(-1)^{\frac{n(n+1)}{2}} (2k\sqrt{-1} \omega \vert_U)^n} \in \mathcal{C}^\infty(U, \mathbb{C})
\end{equation}
for any $n$-forms $\alpha, \beta$ on $U$. With this motivation, we define
\begin{equation}
\label{Equation 5.02}
\langle \sqrt{\lvert \alpha \rvert}, \sqrt{\lvert \beta \rvert} \rangle = \sqrt{\left\lvert \frac{n! \cdot \alpha \wedge \overline{\beta}}{(-1)^{\frac{n(n+1)}{2}} (2k\sqrt{-1} \omega \vert_U)^n} \right\rvert} \in \mathcal{C}^\infty(U).
\end{equation}
Note that the formula (\ref{Equation 5.02}) is well-defined. Indeed, if $\alpha, \alpha'$ are $n$-forms on $U$ such that $\sqrt{\lvert \alpha \rvert} = \sqrt{\lvert \alpha' \rvert}$, then we can check that $\alpha = \eta \alpha'$ for some $\eta \in \mathcal{C}^\infty(U, \operatorname{U}(1))$ and therefore the definition of $\langle \sqrt{\lvert \alpha \rvert}, \sqrt{\lvert \beta \rvert} \rangle$ by (\ref{Equation 5.02}) coincides with that of $\langle \sqrt{\lvert \alpha' \rvert}, \sqrt{\lvert \beta \rvert} \rangle$. Similarly, if $\beta, \beta'$ are $n$-forms on $U$ such that $\sqrt{\lvert \beta \rvert} = \sqrt{\lvert \beta' \rvert}$, then the definition of $\langle \sqrt{\lvert \alpha \rvert}, \sqrt{\lvert \beta \rvert} \rangle$ coincides with that of $\langle \sqrt{\lvert \alpha \rvert}, \sqrt{\lvert \beta' \rvert} \rangle$. 
In particular, $\langle \sqrt{\lvert d^nx \rvert}, \sqrt{\lvert d^n\theta \rvert} \rangle = (2k)^{-\frac{n}{2}}$. We now have a sesquillinear pairing
\begin{equation*}
	\langle \quad, \quad \rangle: \Gamma(\mathring{X}, \overline{L}_+^k)' \sqrt{\lvert d^n x \rvert} \times \Gamma(\mathring{X}, L_+^k) \sqrt{\lvert d^n \theta \rvert} \to \mathbb{C}
\end{equation*}
defined by
\begin{equation}
	\langle s \sqrt{\lvert d^n x \rvert}, \tau \sqrt{\lvert d^n \theta \rvert} \rangle = \langle \langle \sqrt{\lvert d^nx \rvert}, \sqrt{\lvert d^n\theta \rvert} \rangle s, \tau \rangle = (2k)^{-\frac{n}{2}} \langle s, \tau \rangle,
\end{equation}
for all $s \in \Gamma(\mathring{X}, \overline{L}_+^k)'$ and $\tau \in \Gamma(\mathring{X}, L_+^k)$.\par
For any $m \in \mathbb{Z}^n$, define
\begin{equation}
\check{\sigma}_\hbar^m = \frac{k^{\frac{n}{2}}}{\pi^{\frac{n}{4}}} e^{\sqrt{-1} m \cdot \theta} \mathbf{1}_{\circ, L_+^k}^{\operatorname{U}(1)} \sqrt{\lvert d^n \theta \rvert} \in \Gamma \left( \mathring{X}, L_+^k \vert_{\mathring{X}} \sqrt{\lvert K^{\mathcal{P}^{\operatorname{h}}} \rvert} \right) \cong \Gamma(\mathring{X}, L_+^k) \sqrt{\lvert d^n\theta \rvert}.
\end{equation}
It is easy to check that $\langle \sigma_\hbar^m, \check{\sigma}_\hbar^{m'} \rangle = \delta_{m, m'}$ for all $m, m' \in \Lambda_\hbar$. We define the kernel
\begin{align*}
K^\hbar = \sum_{m \in \Lambda_\hbar} \check{\sigma}_\hbar^m \otimes \sigma_\hbar^m \in \check{\mathcal{H}}^\hbar \otimes_\mathbb{C} \mathcal{H}^\hbar,
\end{align*}
where $\check{\mathcal{H}}^\hbar := \bigoplus_{m \in \Lambda_\hbar} \mathbb{C} \check{\sigma}_\hbar^m$ is a subspace of $\Gamma(\mathring{X}, L_+^k) \sqrt{\lvert d^n \theta \rvert}$, 
and the $\mathbb{C}$-linear map
\begin{align*}
\Pi^\hbar: \Gamma(\mathring{X}, \overline{L}_+^k)' \sqrt{\lvert d^n x \rvert} \to \mathcal{H}^\hbar, \quad s \mapsto \Pi^\hbar s := (\langle \quad, \quad \rangle \otimes \operatorname{Id})(s \otimes K^\hbar),
\end{align*}
which is clearly a projection onto $\mathcal{H}^\hbar$.

\begin{definition}
	The \emph{Toeplitz operator for the pair of polarizations} $(\mathcal{P}^{\operatorname{h}}, \mathcal{P}^{\operatorname{v}})$ is the map
	\begin{align*}
	Q^\hbar: \mathcal{C}^\infty(X, \mathbb{C}) \times \mathcal{H}^\hbar \to \mathcal{H}^\hbar, \quad (f, s) \mapsto Q_f^\hbar s := \Pi^\hbar (fs).
	\end{align*}
\end{definition}

This definition keeps our promise in Section \ref{Section 3} that $Q_f^\hbar$ only depends on the restriction of $f$ on $\mathring{X}$, so that the operator $Q_f^\hbar \circ Q_g^\hbar - Q_{f \star_N g}^\hbar$ makes sense. We can write $Q^\hbar$ in terms of basis $\{\sigma_\hbar^m\}_{m \in \Lambda_\hbar}$ as follows.

\begin{proposition}
	\label{Proposition 3.9}
	Suppose $f \in \mathcal{C}^\infty(X, \mathbb{C})$. Then for any $m \in \Lambda_\hbar$,
	\begin{equation}
		\label{Equation 5.5}
		Q_f^\hbar \sigma_\hbar^m = \sum_{q \in \Lambda_\hbar - m} \widehat{f}_q(\hbar m) \sigma_\hbar^{m+q}.
	\end{equation}
	In general, for any $s = \sum_{m \in \Lambda_\hbar} s_m \sigma_\hbar^m \in \mathcal{H}^\hbar$,
	\begin{equation}
	\label{Equation 7.1}
	Q_f^\hbar s = \sum_{m \in \Lambda_\hbar} \left( \sum_{m' \in \Lambda_\hbar} \widehat{f}_{m-m'}(\hbar m') s_{m'} \right) \sigma_\hbar^m.
	\end{equation}
\end{proposition}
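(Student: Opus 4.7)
The plan is to unfold all the definitions and reduce the statement to a direct computation of matrix coefficients. By linearity in $s$, it suffices to prove (\ref{Equation 5.5}); the general formula (\ref{Equation 7.1}) then follows immediately by reindexing the double sum $\sum_{m'} s_{m'} \sum_{q \in \Lambda_\hbar - m'} \widehat{f}_q(\hbar m') \sigma_\hbar^{m'+q}$ by setting $m = m' + q$. For (\ref{Equation 5.5}), I would unpack
\begin{equation*}
Q_f^\hbar \sigma_\hbar^m = \Pi^\hbar(f \sigma_\hbar^m) = (\langle\,\cdot\,,\,\cdot\,\rangle \otimes \operatorname{Id}) \bigl( f\sigma_\hbar^m \otimes \textstyle\sum_{m' \in \Lambda_\hbar} \check{\sigma}_\hbar^{m'} \otimes \sigma_\hbar^{m'} \bigr) = \sum_{m' \in \Lambda_\hbar} \langle f\sigma_\hbar^m, \check{\sigma}_\hbar^{m'} \rangle \, \sigma_\hbar^{m'},
\end{equation*}
so that the proposition reduces to showing $\langle f\sigma_\hbar^m, \check{\sigma}_\hbar^{m'}\rangle = \widehat{f}_{m'-m}(\hbar m)$ for each $m' \in \Lambda_\hbar$.

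The calculation of the matrix coefficient splits into a half-density part and a distributional part. Writing $f\sigma_\hbar^m = (4\pi)^{n/4}(f\delta_\hbar^m)\sqrt{\lvert d^nx\rvert}$ and using the explicit formula for $\check{\sigma}_\hbar^{m'}$, the half-density pairing $\langle\sqrt{\lvert d^nx\rvert},\sqrt{\lvert d^n\theta\rvert}\rangle = (2k)^{-n/2}$ combines with the two normalization constants to give an overall scalar $(4\pi)^{n/4} \cdot (2k)^{-n/2} \cdot k^{n/2}/\pi^{n/4} = 1$. Consequently, the problem reduces to evaluating the distributional pairing $\langle f\delta_\hbar^m, e^{\sqrt{-1}m'\cdot\theta}\mathbf{1}_{\circ, L_+^k}^{\operatorname{U}(1)}\rangle$. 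Multiplying the distribution $\delta_\hbar^m$ by the smooth function $f$ and then applying the explicit formula for $\delta_\hbar^m$, this pairing becomes
\begin{equation*}
\frac{1}{(2\pi)^n}\int_T e^{\sqrt{-1}m\cdot\theta} f(\hbar m, \theta) e^{-\sqrt{-1}m'\cdot\theta} d^n\theta = \frac{1}{(2\pi)^n}\int_T f(\hbar m, \theta) e^{-\sqrt{-1}(m'-m)\cdot\theta} d^n\theta,
\end{equation*}
which is exactly $\widehat{f}_{m'-m}(\hbar m)$ by the definition of the fibrewise Fourier coefficient. Setting $q = m' - m$ transforms the sum over $m' \in \Lambda_\hbar$ into the sum over $q \in \Lambda_\hbar - m$ appearing in (\ref{Equation 5.5}).

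The main obstacle is bookkeeping. I have to fix a convention for multiplication of a distributional section by a smooth function (so that $\langle f s, \tau\rangle = s(f\bar{\tau})$) and verify it is consistent with the pairing $\langle s, \tau\rangle = s(\bar{\tau})$ used throughout Section \ref{Section 4}, then keep careful track of the four constants $(2\pi)^n$, $(4\pi)^{n/4}$, $k^{n/2}/\pi^{n/4}$, and $(2k)^{-n/2}$ and confirm that they collapse to give the correct normalization of the Fourier coefficient. A small separate point to address is that $e^{\sqrt{-1}m'\cdot\theta}\mathbf{1}_{\circ, L_+^k}^{\operatorname{U}(1)}$ is a priori only defined on $\mathring{X}$, but since $m \in \Lambda_\hbar$ forces $\hbar m \in \mathring{P}_X$, the distribution $f\delta_\hbar^m$ is supported inside $\mathring{X}$, so the pairing with a section defined only on $\mathring{X}$ is unambiguous.
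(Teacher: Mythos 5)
Your proposal is correct and follows essentially the same route as the paper's own proof: expand $\Pi^\hbar$ against the kernel $K^\hbar$ and compute the matrix coefficients $\langle f\sigma_\hbar^m, \check{\sigma}_\hbar^{m'} \rangle = \widehat{f}_{m'-m}(\hbar m)$, which is exactly the one-line computation the paper performs (your constant check $(4\pi)^{n/4}(2k)^{-n/2}k^{n/2}\pi^{-n/4}=1$ and the remark that $\hbar m \in \mathring{P}_X$ justifies pairing with sections defined only on $\mathring{X}$ are just explicit bookkeeping the paper leaves implicit, consistent with $\langle \sigma_\hbar^m, \check{\sigma}_\hbar^{m'}\rangle = \delta_{m,m'}$).
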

\begin{proof}
	For all $m, m' \in \Lambda_\hbar$,
	\begin{align*}
	\langle f \sigma_\hbar^{m'}, \check{\sigma}_\hbar^m \rangle = \frac{1}{(2\pi)^n} \int_T e^{-\sqrt{-1}(m-m') \cdot \theta} f(\hbar m', \theta) d^n\theta = \widehat{f}_{m-m'}(\hbar m').
	\end{align*}
	Therefore, we have (\ref{Equation 7.1}) and (\ref{Equation 5.5}) directly follows from it.
\end{proof}

\subsection{Locality}
\label{Subsection 5.2}
\quad\par
For $f, g \in \mathcal{C}^\infty(X, \mathbb{C})$ and $N \in \mathbb{N} \cup \{0\}$, define $f \star_N g = \sum_{i=0}^N \hbar^i C_i(f, g)$. Both the toric polarization $\mathcal{P}^{\operatorname{v}}$ and the toric star product $\star$ have singularity on $X \backslash \mathring{X}$. Because of this issue, we need certain treatments on the estimation of the operator norm of $Q_f^\hbar \circ Q_g^\hbar - Q_{f \star_N g}^\hbar$. In order to do so, we first need to discuss \emph{locality} in our quantization scheme.\par
On the one hand, we know that the toric star product $\star$ is local:  
$(f \star g)(x, \theta)$ only depends on the jets of $f$ and $g$ at the point $(x, \theta) \in \mathring{X}$, so $\star$ induces for every open subset $U$ of $\mathring{X}$ a $\mathbb{C}[[\hbar]]$-bilinear map:
\begin{align*}
	\mathcal{C}^\infty(U, \mathbb{C})[[\hbar]] \otimes_{\mathbb{C}[[\hbar]]} \mathcal{C}^\infty(U, \mathbb{C})[[\hbar]] \to \mathcal{C}^\infty(U \cap \mathring{X}, \mathbb{C})[[\hbar]].
\end{align*}
On the other hand, motivated by the work of \cite{W2000} which studies locality in formal GNS representations in deformation quantization, we define a sheaf of modules $\underline{\mathcal{H}}^\hbar$ on $P_X$, whose space of global sections is $\mathcal{H}^\hbar$, as follows.

\begin{definition}
	The sheaf of modules $\underline{\mathcal{H}}^\hbar$ on $P_X$ consists of the following data.
	\begin{itemize}
		\item For any open subset $V$ of $P_X$, $\underline{\mathcal{H}}^\hbar(V)$ is defined to be the subspace $\mathcal{H}_V^\hbar$ of distributional sections in $\mathcal{H}^\hbar$ supported on $\mu^{-1}(V)$. Then $\mathcal{H}_V^\hbar = \sum_{m \in (kV) \cap \Lambda_\hbar} \mathbb{C} \sigma_\hbar^m$ is a Hilbert subspace of $\mathcal{H}^\hbar$. Also, $\mathcal{C}^\infty(V, \mathbb{C})$ acts on $\mathcal{H}_V^\hbar$ by $f \cdot s = (\mu^*f) s$.
		\item For all open subsets $V, V' \in P_X$ with $V \subset V'$, the restriction map $\underline{\mathcal{H}}^\hbar(V') \to \underline{\mathcal{H}}^\hbar(V)$ is defined to be the orthogonal projection $\pi_{V, V'}^\hbar: \mathcal{H}_{V'}^\hbar \to \mathcal{H}_V^\hbar$ (we write $\pi_V^\hbar$ in place of $\pi_{P_X, V}^\hbar$ for simplicity).
	\end{itemize}
\end{definition}

A natural question is whether our construction of Toeplitz operators $Q_f^\hbar$ are local in an appropriate sense. The following definition is analogous to Definition 3.5 in \cite{W2000}.

\begin{definition}
	A linear operator $A$ on $\mathcal{H}^\hbar$ is called \emph{local} if for any $s \in \mathcal{H}^\hbar$, $\operatorname{supp} (As) \subset \operatorname{supp} s$.
\end{definition}

In general, $Q_f^\hbar$ might not be a local operator on $\mathcal{H}^\hbar$ in this sense - for distinct $m, m' \in \Lambda_\hbar$, we can always find a function $f \in \mathcal{C}^\infty(X, \mathbb{C})$ such that $Q_f^\hbar \sigma_\hbar^m = \sigma_\hbar^{m'}$ and hence $\operatorname{supp} Q_f^\hbar \sigma_\hbar^m \not\subset \operatorname{supp} \sigma_\hbar^m$. In other words, $Q_f^\hbar$ might not preserve subspaces $\mathcal{H}_V^\hbar$'s.\par
However, the sequence of Toeplitz operators $\{ Q_f^\hbar \}_{k \in 2\mathbb{N} - 1, k \geq k_0}$ is `local' in $f$. By \emph{locality} we mean that for any open subset $V$ of $P_X$, the restriction $f \vert_{\mu^{-1}(V)}$ and the sequence $\left\{ \left. Q_f^\hbar \right\vert_{\mathcal{H}_V^\hbar} \right\}_{k \in 2\mathbb{N} - 1, k \geq k_0}$ of restricted Toeplitz operators are completely determined by each other. The precise statement is given as follows.

\begin{proposition}
	\label{Proposition 3.11}
	Let $f \in \mathcal{C}^\infty(X, \mathbb{C})$ and $V$ be an open subset of $P_X$. Then the following statements hold.
	\begin{enumerate}
		\item If $f \vert_{\mu^{-1}(V)} = 0$, then $Q_f^\hbar \vert_{\mathcal{H}_V^\hbar} = 0$ for all odd $k \in \mathbb{N}$ with $k \geq k_0$.
		\item If there exists $\delta \in (0, \tfrac{1}{k_0})$ such that $Q_f^\hbar \vert_{\mathcal{H}_V^\hbar} = 0$ for all odd $k \in \mathbb{N}$ with $\hbar = \tfrac{1}{k} < \delta$, then $f \vert_{\mu^{-1}(V)} = 0$.
	\end{enumerate}
\end{proposition}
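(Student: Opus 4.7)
The plan is to reduce both statements to the explicit action formula
\[
Q_f^\hbar \sigma_\hbar^m = \sum_{q \in \Lambda_\hbar - m} \widehat{f}_q(\hbar m) \, \sigma_\hbar^{m+q}
\]
from Proposition \ref{Proposition 3.9}, combined with the fact that the set $\hbar \Lambda_\hbar$ becomes asymptotically dense in $V \cap \mathring{P}_X$ as $\hbar \to 0^+$. I note first that whenever $V$ is a non-empty open subset of $P_X$, the intersection $V \cap \mathring{P}_X$ is non-empty since $\mathring{P}_X$ is dense in $P_X$; and that by the half-form shift built into $P_{L_+^k}$, every $m \in \Lambda_\hbar$ satisfies $\hbar m \in \mathring{P}_X$.

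For (i), assume $f \vert_{\mu^{-1}(V)} = 0$. Then $\operatorname{Bl}_X^* f$ vanishes on $(V \cap \mathring{P}_X) \times T$, so $\widehat{f}_q(x) = 0$ for every $q \in \mathbb{Z}^n$ and every $x \in V \cap \mathring{P}_X$. For any $m \in (kV) \cap \Lambda_\hbar$ the observation above gives $\hbar m \in V \cap \mathring{P}_X$, so every coefficient in the displayed formula vanishes and hence $Q_f^\hbar \sigma_\hbar^m = 0$. Since the vectors $\sigma_\hbar^m$ with $m \in (kV) \cap \Lambda_\hbar$ span $\mathcal{H}_V^\hbar$, we conclude $Q_f^\hbar \vert_{\mathcal{H}_V^\hbar} = 0$.

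For (ii), I aim to show $\widehat{f}_q \equiv 0$ on $V$ for every $q \in \mathbb{Z}^n$. Once this is established, Fourier inversion on each $T$-fibre gives $f \vert_{\mu^{-1}(V) \cap \mathring{X}} = 0$, and continuity of $f$ together with density of $\mathring{X}$ in $X$ yields $f \vert_{\mu^{-1}(V)} = 0$. Fix $q \in \mathbb{Z}^n$ and an arbitrary $x_0 \in V \cap \mathring{P}_X$. Because $x_0$ lies at positive Euclidean distance from $\partial P_X$ and $V$ is a neighborhood of $x_0$ in $P_X$, for all sufficiently small odd $\hbar = \tfrac{1}{k} < \delta$ I can pick $m \in \mathbb{Z}^n$ with $|\hbar m - x_0| \leq \sqrt{n} \, \hbar$ and with both $m$ and $m + q$ belonging to $\Lambda_\hbar \cap (kV)$; the boundary layer of width $O(1/k)$ separating $P_{L_+^k}/k$ from $P_X$ is harmless as soon as $k$ is large compared to $|q|$ and the inverse distance of $x_0$ to $\partial P_X$. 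The hypothesis $Q_f^\hbar \sigma_\hbar^m = 0$ together with linear independence of the basis $\{\sigma_\hbar^{m+q}\}$ forces $\widehat{f}_q(\hbar m) = 0$, and letting $\hbar \to 0^+$ with continuity of $\widehat{f}_q$ on $P_X$ gives $\widehat{f}_q(x_0) = 0$. Continuity of $\widehat{f}_q$ on $P_X$ then upgrades vanishing on $V \cap \mathring{P}_X$ to vanishing on $V$.

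The only delicate point in the whole argument is the lattice-approximation step in (ii): verifying that, for $x_0$ fixed in $\mathring{P}_X$ and $q$ fixed in $\mathbb{Z}^n$, one can simultaneously arrange $m$ and $m + q$ to lie in $\Lambda_\hbar$ for a sequence $\hbar_j \to 0^+$ of odd reciprocals with $\hbar_j m_j \to x_0$. This is a routine quantitative consequence of the description $\Lambda_\hbar = P_{L_+^k} \cap \mathbb{Z}^n$ and the fact that $P_{L_+^k}/k$ exhausts $\mathring{P}_X$ from the inside as $k \to \infty$, so I do not expect any genuine obstacle.
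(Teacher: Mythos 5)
Your argument is correct and follows essentially the same route as the paper: both proofs reduce everything to the matrix coefficients $\widehat{f}_q(\hbar m)$ from Proposition \ref{Proposition 3.9}, use the fact that $\hbar m$ stays a definite distance inside $\mathring{P}_X$ so that $m+q$ remains in $\Lambda_\hbar$ (this is exactly Lemma \ref{Lemma 3.12} applied near a point rather than uniformly on a relatively compact $V$), and then conclude by continuity of $\widehat{f}_q$, fibrewise Fourier inversion, and density of $\mu^{-1}(V\cap\mathring{P}_X)$ in $\mu^{-1}(V)$. The only cosmetic difference is that you argue pointwise with a sequence $\hbar_j m_j \to x_0$ while the paper covers $V$ by relatively compact opens and uses density of the lattice points $\hbar\Lambda_\hbar$ in $V$; your ``delicate'' lattice-approximation step is indeed routine, for the reason you indicate.
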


We need a lemma for its proof.

\begin{lemma}
	\label{Lemma 3.12}
	Let $K$ be a compact subset of $\mathring{P}_X$ and $Z$ be a finite subset of $\mathbb{Z}^n$. Then there exists $\delta \in (0, \tfrac{1}{k_0})$ such that for all odd $k \in \mathbb{N}$ with $\hbar = \tfrac{1}{k} < \delta$, if $r \in (k K) \cap \Lambda_\hbar$ and $q \in Z$, then
	\begin{align*}
	r + q \in \Lambda_\hbar.
	\end{align*}
\end{lemma}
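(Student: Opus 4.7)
The plan is to exploit the fact that $r/k$ lies in a compact subset $K$ of the open polytope $\mathring{P}_X$, and to translate this geometric distance-from-the-boundary condition into the algebraic inequalities defining $\Lambda_\hbar$. Since $\Lambda_\hbar$ is the set of lattice points of the corrected polytope $P_{L_+^k}$, cut out by the finitely many facet inequalities $\langle x, \nu_F\rangle + k\lambda_F - \tfrac{1}{2} \geq 0$ (for $F \in \Sigma^{(1)}$), and since the addition of $q \in Z$ only perturbs these linear functionals by a bounded integral amount, we simply need to show that the Bohr--Sommerfeld points in $kK$ sit far enough inside $P_{L_+^k}$ that such perturbations do not eject them.

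First I would invoke compactness of $K$ together with $K \subset \mathring{P}_X$: the continuous functions $l_F(x) = \langle x, \nu_F\rangle + \lambda_F$ are strictly positive on $K$, so (using that $\Sigma^{(1)}$ is finite)
\begin{equation*}
\epsilon := \min_{F \in \Sigma^{(1)}} \min_{x \in K} l_F(x) > 0.
\end{equation*}
Next, using finiteness of $Z$ and $\Sigma^{(1)}$, I would set
\begin{equation*}
M := \max_{q \in Z,\, F \in \Sigma^{(1)}} |\langle q, \nu_F\rangle| < \infty.
\end{equation*}

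Now I would define $\delta := \min\{\tfrac{\epsilon}{M + 1},\, \tfrac{1}{k_0+1}\}$, which plainly lies in $(0, \tfrac{1}{k_0})$, and verify the claim. Fix an odd $k$ with $\hbar = \tfrac{1}{k} < \delta$, so in particular $k > k_0$ and $k\epsilon > M + 1$. For $r \in (kK) \cap \Lambda_\hbar$ and $q \in Z$, we have $r + q \in \mathbb{Z}^n$ automatically, so it suffices to check that $r + q$ satisfies every facet inequality of $P_{L_+^k}$. For each $F \in \Sigma^{(1)}$, writing $r = k \tilde r$ with $\tilde r \in K$,
\begin{equation*}
\langle r + q, \nu_F\rangle + k\lambda_F - \tfrac{1}{2} = k\, l_F(\tilde r) + \langle q, \nu_F\rangle - \tfrac{1}{2} \geq k\epsilon - M - \tfrac{1}{2} > \tfrac{1}{2} > 0,
\end{equation*}
which is exactly the condition $r + q \in P_{L_+^k} \cap \mathbb{Z}^n = \Lambda_\hbar$.

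There is no substantial obstacle here; the only mild subtlety is keeping track of the half-form shift by $-\tfrac{1}{2}$ in the defining inequalities of $P_{L_+^k}$ (versus those of $kP_X$), which is what forces the choice of $\delta$ to depend on $\epsilon$, $M$, and $k_0$ simultaneously, and why we absorb the $\tfrac{1}{2}$ into the $M+1$ rather than $M$.
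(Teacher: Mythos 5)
Your proof is correct and takes essentially the same route as the paper: a uniform positive lower bound $\epsilon$ for the facet functions $l_F$ on the compact set $K$, a bound $M$ on $\lvert\langle q,\nu_F\rangle\rvert$ over the finite set $Z$, and a choice of $\delta$ making the shifted facet inequalities of $P_{L_+^k}$ (including the half-form shift $-\tfrac{1}{2}$) strictly positive. The only difference is your explicit formula for $\delta$ versus the paper's two-step choice $\delta\in(0,\tfrac{\varepsilon}{2})$ with $\delta\lvert\langle q,\nu_F\rangle\rvert\leq\tfrac{\varepsilon}{2}$, which is immaterial.
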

\begin{proof}
	Since $K$ is compact and $\Sigma^{(1)}$ is finite, there exists $\varepsilon \in (0, \tfrac{1}{k_0})$ such that for all $F \in \Sigma^{(1)}$ and $x \in K$, $l_F(x) \geq \varepsilon$. Also, as $Z$ is finite, there exists $\delta \in (0, \tfrac{\varepsilon}{2})$ such that for all $F \in \Sigma^{(1)}$ and $q \in Z$,
	\begin{equation}
	\label{Equation 3.5}
	\delta \lvert \langle q, \nu_F \rangle \rvert \leq \tfrac{\varepsilon}{2}.
	\end{equation}
	Consider any odd $k \in \mathbb{N}$ with $\hbar = \tfrac{1}{k} < \delta$. Suppose $r \in (k K) \cap \Lambda_\hbar$ and $q \in Z$. Then $\hbar r \in K$. For all $F \in \Sigma^{(1)}$, by (\ref{Equation 3.5}),
	\begin{align*}
	l_F(\hbar(r + q)) = & l_F(\hbar r) + \hbar \langle q, \nu_F \rangle \geq l_F(\hbar r) - \hbar \lvert \langle q, \nu_F \rangle \rvert \geq \varepsilon - \delta \lvert \langle q, \nu_F \rangle \rvert \geq \tfrac{\varepsilon}{2},\\
	\therefore l_F^{L_+^k} (r + q) = & l_{F, \hbar}(r + q) - \tfrac{1}{2} = k l_F(\hbar(r + q)) - \tfrac{1}{2} \geq \tfrac{1}{2} (k \varepsilon - 1) > \tfrac{1}{2} (\tfrac{\varepsilon}{\delta} - 1) > 0.
	\end{align*}
	It implies that $r + q \in P_{L_+^k}$. Finally, because $r \in \Lambda_\hbar$ and $q \in \mathbb{Z}^n$, $r + q \in \Lambda_\hbar$.
\end{proof}

\begin{proof}[\myproof{Proposition}{\ref{Proposition 3.11}}]
	\quad
	\begin{enumerate}
		\item Fix odd $k \in \mathbb{N}$ with $k \geq k_0$. We observe from (\ref{Equation 7.1}) that if $s \in \mathcal{H}^\hbar$ and $\operatorname{supp} f \cap \operatorname{supp} s = \emptyset$, then $Q_f^\hbar s = 0$. Then it directly follows that $Q_f^\hbar \vert_{\mathcal{H}_V^\hbar} = 0$.
		\item Note that $\mathcal{H}_V^\hbar = \mathcal{H}_{V \cap \mathring{P}_X}^\hbar$ for any odd $k \in \mathbb{N}$ with $k \geq k_0$. Since $\mu^{-1}(V \cap \mathring{P}_X)$ is dense in $\mu^{-1}(V)$, $f \vert_{\mu^{-1}(V \cap \mathring{P}_X)} = 0$ implies $f \vert_{\mu^{-1}(V)} = 0$. Thus, without loss of generality, we assume $V \subset \mathring{P}_X$. It is then clear that $f \vert_{\mu^{-1}(V)} = 0$ if and only if for all $p \in \mathbb{Z}^n$, $\widehat{f}_p \vert_V = 0$. Now fix $p \in \mathbb{Z}^n$. It remains to show that $\widehat{f}_p \vert_V = 0$.\par
		Consider first the special case when $V$ is a relatively compact open subset of $\mathring{P}_X$. Then the closure $\overline{V}$ of $V$ in $\mathring{P}_X$ is compact. By Lemma \ref{Lemma 3.12}, we can choose $\delta' \in (0, \delta)$ such that for all odd $k \in \mathbb{N}$ with $\hbar = \tfrac{1}{k} < \delta'$, if $r \in (k V) \cap \Lambda_\hbar \subset (k \overline{V}) \cap \Lambda_\hbar$, then $r + p \in \Lambda_\hbar$ and therefore $\widehat{f}_p(\hbar r) = 0$ as it follows from (\ref{Equation 7.1}) that the value $\widehat{f}_p(\hbar r)$ appears as an entry of the matrix form of the restricted operator $Q_f^\hbar \vert_{\mathcal{H}_V^\hbar}: \mathcal{H}_V^\hbar \to \mathcal{H}^\hbar$. We have shown that $\widehat{f}_p$ vanishes on the dense subset
		\begin{align*}
		V \cap \bigcup_{\substack{k \in 2\mathbb{N} - 1 \\ \hbar = 1/k < \delta'}} (\hbar \Lambda_\hbar)
		\end{align*}
		of $V$, therefore $\widehat{f}_p \vert_V = 0$.\par
		For the general case, we pick an open cover $\{V_i\}_{i \in I}$ of $V$ such that for all $i \in I$, $V_i$ is a relatively compact open subset of $\mathring{P}_X$ and apply the above argument for each $V_i$. Then we have the conclusion that $\widehat{f}_p \vert_V = 0$. 
	\end{enumerate}
\end{proof}



\subsection{Norm estimations}
\label{Subsection 5.3}
\quad\par
We first define the following norms
\begin{align*}
\lVert s \rVert_1 = \sum_{m \in \Lambda_\hbar} \lvert s_m \rvert \quad \text{and} \quad \lVert s \rVert_\infty = \sup_{m \in \Lambda_\hbar} \lvert s_m \rvert,
\end{align*}
for any $s = \sum_{m \in \Lambda_\hbar} s_m \sigma_\hbar^m \in \mathcal{H}^\hbar$, and denote by $\lVert A \rVert_1$ and $\lVert A \rVert_\infty$ the operator norm of an operator $A$ on $\mathcal{H}^\hbar$ with respect to $\lVert \quad \rVert_1$ and $\lVert \quad \rVert_\infty$ respectively. We then have the following variant of H\"older inequality (one might consult Chapter 5 in \cite{HJ2013}; see also Lemma 6.3 in \cite{LY2021}). 

\begin{lemma}
	\label{Lemma 6.5}
	Let $r \in \mathbb{N}$. For $1 \leq p \leq \infty$, let $\lVert \quad \rVert_p$ be the operator norm with respect to the $l^p$ norm on $\mathbb{C}^r$. Suppose $1 \leq p, q \leq \infty$ are H\"older conjugates, i.e. $\tfrac{1}{p} + \tfrac{1}{q} = 1$ (by convention, when $p = 1$, $q = \infty$). Then for any $r \times r$ complex matrix $A$,
	\begin{equation*}
	\lVert A \rVert_2^2 \leq \lVert A \rVert_p \lVert A \rVert_q.
	\end{equation*}
\end{lemma}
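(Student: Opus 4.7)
The plan is to invoke the Riesz--Thorin complex interpolation theorem, which in finite dimensions supplies exactly the inequality stated. The key arithmetic observation is that when $p$ and $q$ are H\"older conjugates, the harmonic midpoint of $p$ and $q$ lands precisely at $2$: from the interpolation formula $\tfrac{1}{r_\theta}=\tfrac{1-\theta}{p}+\tfrac{\theta}{q}$ with $\theta=\tfrac{1}{2}$ one gets $\tfrac{1}{r_{1/2}}=\tfrac{1}{2}(\tfrac{1}{p}+\tfrac{1}{q})=\tfrac{1}{2}$, hence $r_{1/2}=2$.

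Applying Riesz--Thorin at $\theta=\tfrac{1}{2}$ then yields $\lVert A\rVert_2\leq\lVert A\rVert_p^{1/2}\lVert A\rVert_q^{1/2}$, and squaring both sides produces the claimed bound $\lVert A\rVert_2^2\leq\lVert A\rVert_p\lVert A\rVert_q$. Because $A$ acts on finite-dimensional $\mathbb{C}^r$, there are no measurability subtleties: the standard proof of Riesz--Thorin via the three-lines lemma applied to the sesquilinear form $(x,y)\mapsto\langle Ax,y\rangle$, where $x,y$ range over unit vectors in appropriate $l^{p'}$-norms, goes through verbatim. The endpoint cases $p=1,\,q=\infty$ (and the reverse) are covered by the usual convention $\lVert x\rVert_\infty=\max_i\lvert x_i\rvert$.

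As a self-contained alternative bypassing complex interpolation, one can combine the identity $\lVert A\rVert_2^2=\lVert A^*A\rVert_2$ with the duality $\lVert A\rVert_q=\lVert A^*\rVert_p$ for H\"older conjugate exponents and sub-multiplicativity of a fixed operator $l^p$-norm; this reduces the inequality to bounding the spectral radius of $A^*A$ by $\lVert A^*\rVert_p\lVert A\rVert_p$, which is immediate. I do not expect any genuine obstacle here: the statement is a finite-dimensional instance of a classical interpolation inequality, which is precisely why the authors simply refer to Chapter 5 of \cite{HJ2013} and Lemma 6.3 of \cite{LY2021}. The only mild care needed is bookkeeping for the endpoints and for the fact that $\lVert\cdot\rVert_2$ denotes the spectral norm rather than the Frobenius norm, but both are standard conventions in the references cited.
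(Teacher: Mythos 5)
Your proposal is correct. The paper itself gives no argument for this lemma --- it is quoted with a pointer to Chapter 5 of \cite{HJ2013} and Lemma 6.3 of \cite{LY2021} --- and your ``self-contained alternative'' is exactly the standard proof behind those citations: since $A^*A$ is Hermitian positive semidefinite, $\lVert A \rVert_2^2 = \rho(A^*A) \leq \lVert A^*A \rVert_p \leq \lVert A^* \rVert_p \lVert A \rVert_p = \lVert A \rVert_q \lVert A \rVert_p$, using that the spectral radius is dominated by any induced operator norm, submultiplicativity, and the duality $\lVert A^* \rVert_p = \lVert A \rVert_q$ for H\"older conjugate exponents. Your primary route via Riesz--Thorin at $\theta = \tfrac12$ is also valid (the conjugacy $\tfrac1p + \tfrac1q = 1$ is precisely what places the interpolated exponent at $2$, endpoints included), but it invokes heavier machinery than the two-line matrix argument; the only cosmetic caveat is that the three-lines proof is usually run on the bilinear pairing $\sum_j (Ax)_j y_j$ rather than the sesquilinear form, since holomorphy in the interpolation parameter is needed, though one can simply cite the theorem as a black box. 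Either way the inequality is established, so there is no gap.
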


\begin{proposition}
	For all $f \in \mathcal{C}^\infty(X, \mathbb{C})$, there exists $K > 0$ such that
	\begin{align*}
	\left\lVert Q_f^\hbar \right\rVert \leq K
	\end{align*}
	for all odd $k \in \mathbb{N}$ with $k \geq k_0$.
\end{proposition}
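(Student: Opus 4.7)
The plan is to realize $Q_f^\hbar$ as a matrix in the orthonormal basis $\{\sigma_\hbar^m\}_{m \in \Lambda_\hbar}$ of the (finite-dimensional) Hilbert space $\mathcal{H}^\hbar$ via Proposition \ref{Proposition 3.9}, so that its entries are $(Q_f^\hbar)_{m, m'} = \widehat{f}_{m-m'}(\hbar m')$ for $m, m' \in \Lambda_\hbar$. I would then bound the $\ell^2$ operator norm via Lemma \ref{Lemma 6.5} applied with H\"older conjugates $p = 1$ and $q = \infty$, giving
$$\lVert Q_f^\hbar \rVert^2 \leq \lVert Q_f^\hbar \rVert_1 \cdot \lVert Q_f^\hbar \rVert_\infty,$$
where the two factors on the right are respectively the maximum absolute column sum and maximum absolute row sum of the matrix.

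The crucial geometric input is that, thanks to the half-form correction, every evaluation point $\hbar m'$ with $m' \in \Lambda_\hbar$ lies in the open interior $\mathring{P}_X$; indeed $\hbar \Lambda_\hbar \subset \tfrac{1}{k} P_{L_+^k} \subset \mathring{P}_X$ because $P_{L_+^k}$ is strictly contained in $P_{X, \hbar} = kP_X$. Consequently Lemma \ref{Lemma 5.3}(1) applies with the empty multi-index $I = 0$: for any $r > 1$ there exists $C_{f, r} > 0$ with
$$\lvert \widehat{f}_p(x) \rvert \leq C_{f, r} [p]^{-r}, \quad \text{for all } p \in \mathbb{Z}^n \text{ and all } x \in \mathring{P}_X,$$
and the series $\sum_{p \in \mathbb{Z}^n} [p]^{-r}$ factorises coordinate-wise and converges.

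Combining these two ingredients, I would estimate
$$\lVert Q_f^\hbar \rVert_1 = \sup_{m' \in \Lambda_\hbar} \sum_{m \in \Lambda_\hbar} \lvert \widehat{f}_{m-m'}(\hbar m') \rvert \leq C_{f, r} \sum_{p \in \mathbb{Z}^n} [p]^{-r},$$
and, after substituting $p = m - m'$ (so each evaluation point $\hbar(m - p)$ still lies in $\mathring{P}_X$ when $m - p \in \Lambda_\hbar$),
$$\lVert Q_f^\hbar \rVert_\infty = \sup_{m \in \Lambda_\hbar} \sum_{m' \in \Lambda_\hbar} \lvert \widehat{f}_{m-m'}(\hbar m') \rvert \leq C_{f, r} \sum_{p \in \mathbb{Z}^n} [p]^{-r}.$$
Both bounds are uniform in $k$, so Lemma \ref{Lemma 6.5} delivers $\lVert Q_f^\hbar \rVert \leq K := C_{f, r} \sum_{p \in \mathbb{Z}^n} [p]^{-r}$, independent of $k$.

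The only genuine subtlety is guaranteeing a $k$-independent constant in the decay estimate at the evaluation points $\hbar m'$. This is precisely what the half-form correction buys: the shift $\lambda_F^{L_+^k} = k \lambda_F - \tfrac{1}{2}$ places the Bohr-Sommerfeld polytope $P_{L_+^k}$ strictly inside $kP_X$, so the lattice $\Lambda_\hbar$ stays inside $k \mathring{P}_X$ and Lemma \ref{Lemma 5.3}(1) — which is formulated on $\mathring{P}_X$ rather than on all of $P_X$ — applies at every Bohr-Sommerfeld point with a single constant. Without half-form correction, Bohr-Sommerfeld points would accumulate on $\partial P_X$, where uniform decay of $\widehat{f}_p$ in $p$ need not hold, and a substantially more delicate boundary analysis would be required.
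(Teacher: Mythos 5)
Your proposal is correct and takes essentially the same route as the paper: it reads off the matrix entries $\widehat{f}_{m-m'}(\hbar m')$ from Proposition \ref{Proposition 3.9}, bounds the maximal column and row sums uniformly in $k$ via Lemma \ref{Lemma 5.3}(1) (take $r = 2$, so that $\sum_{p \in \mathbb{Z}^n} [p]^{-r}$ converges), and interpolates between the $\ell^1$ and $\ell^\infty$ operator norms with Lemma \ref{Lemma 6.5}, exactly as in the paper's proof. One small caveat concerning your closing commentary only: the half-form correction is not what makes the constant $k$-independent here, since each $\widehat{f}_p$ is continuous on the compact $P_X$, so the bound of Lemma \ref{Lemma 5.3}(1) on $\mathring{P}_X$ extends to all of $P_X$ by continuity and would cover boundary Bohr--Sommerfeld points as well; this does not affect the validity of your estimate.
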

\begin{proof}
	By Lemma \ref{Lemma 5.3}, there exists $K > 0$ such that for all map $\mathbb{Z}^n \to \mathring{P}_X$ given by $m \mapsto x_m$, $\sum_{m \in \mathbb{Z}^n} \lvert \widehat{f}_m(x_m) \rvert \leq K$. Let $k \in \mathbb{N}$ be odd and $k \geq k_0$. For any $s \in \mathcal{H}^\hbar$, writing $s = \sum_{m \in \Lambda_\hbar} s_m \sigma_\hbar^m$,
	\begin{align*}
	\left\lVert Q_f^\hbar s \right\lVert_1 = & \sum_{m \in \Lambda_\hbar} \left\lvert \sum_{m' \in \Lambda_\hbar} \widehat{f}_{m-m'}(\hbar m') s_{m'} \right\rvert \leq K \lVert s \rVert_1.\\
	\left\lVert Q_f^\hbar s \right\lVert_\infty = & \sup_{m \in \Lambda_\hbar} \left\lvert \sum_{m' \in \Lambda_\hbar} \widehat{f}_{m-m'}(\hbar m') s_{m'} \right\rvert \leq K \lVert s \rVert_\infty.
	\end{align*}
	Thus, $\lVert Q_f^\hbar \rVert_1 \leq K$ and $\lVert Q_f^\hbar \rVert_\infty \leq K$. By Lemma \ref{Lemma 6.5}, $\lVert Q_f^\hbar \rVert \leq \sqrt{\lVert Q_f^\hbar \rVert_1 \lVert Q_f^\hbar \rVert_\infty} \leq K$.
\end{proof}

Therefore, the operator norm $\lVert Q_f^\hbar \rVert$ of $Q_f^\hbar$ has a uniform bound, independent of $\hbar$.\par
Now as mentioned in Subsection \ref{Subsection 5.2}, we need careful treatments on $\lVert Q_f^\hbar \circ Q_g^\hbar - Q_{f \star_N g}^\hbar \rVert$, because of \emph{`truncated'} shift operators. Let us first explain this in the example $X = \mathbb{C}\mathbb{P}^1$. Suppose $f, g \in \mathcal{C}^\infty(X, \mathbb{C})$ are of the form
\begin{align*}
f(x, \theta) = \widehat{f}_p(x) e^{\sqrt{-1}p \cdot \theta}, \quad g(x, \theta) = \widehat{g}_q(x) e^{\sqrt{-1}q \cdot \theta} \quad \text{on } \mathring{X},
\end{align*}
where $p, q \in \mathbb{Z}$. Pick the quantum state $\sigma_\hbar^m$ supported on the Bohr-Sommerfeld (BS) fibre over the point $x = \hbar m$ with $m \in \Lambda_\hbar$. On the one hand, $Q_g^\hbar$ shifts the support of $\sigma_\hbar^m$ by $\hbar q$ to the BS fibre over the position $x = \hbar (m + q)$ and then $Q_f^\hbar$ shifts that of $Q_g^\hbar \sigma_\hbar^m$ by $\hbar p$ to the BS fibre over $x = \hbar (m + p + q)$. On the other hand, $Q_{f \star g}^\hbar$ directly shifts the support of $\sigma_\hbar^m$ to the BS fibre over the final position $x = \hbar (m + p + q)$.\par
It might happen that the final position $x = \hbar (m + p + q)$ is within the polytope $P_X$ but the intermediate position $x = \hbar (m + q)$ lies outside $P_X$. This happens quite often when either the initial position $x = \hbar m$ is too close to the boundary of $P_X$ or $\hbar q$ is too large, resulting in the vanishing of $(Q_f^\hbar \circ Q_g^\hbar) \sigma_\hbar^m$ and the survival of $Q_{f \star g}^\hbar \sigma_\hbar^m$. Unlike the case of symplectic tori \cite{LY2021}, we might fail to obtain the approximation $Q_{f \star g}^\hbar \sigma_\hbar^m \approx \widehat{f}_p(m + \hbar q) \widehat{g}_q(m) \sigma_\hbar^{m+p+q}$ by Taylor series as $\widehat{f}_p$ might not extend smoothly by zero on $\mathbb{R}$. We picturize the situation as follows.
\begin{center}
	\begin{tikzpicture}
	\node at (-4.5, 0) {$P_X$};
	\draw (-3.5, 0) -- (3.5, 0);
	\draw (-3.5, -.1) -- (-3.5, .1);
	\draw (3.5, -.1) -- (3.5, .1);
	
	\node at (0, -.6) [below] {$\overline{V}$};
	\draw[dashed] (-2.5, -.3) -- (-2.5, -.6) -- (2.5, -.6) -- (2.5, -.3);
	
	\node at (2, 0) [above] {$\sigma_\hbar^m$};
	\node at (2, 0) {\small $\bullet$};
	\node at (3, 0) [above] {$\sigma_\hbar^{m+p+q}$};
	\node at (3, 0) {\small $\bullet$};
	\node at (5, 0) [above] {`$\sigma_\hbar^{m+q}$'};
	\node at (5, 0) {\small $\circ$};
	\node at (5.5, 0) [above right] {, a missing `state'};
	
	\node at (1.5, 1) [above] {Shift by $\hbar q$};
	\draw[->] (2, .7) arc (135: 45: 2);
	\node at (5.5, -.5) [below] {Shift by $\hbar p$};
	\draw[->] (5, -.2) arc (-45: -135: 1.25);
	\end{tikzpicture}
\end{center}\par
To avoid this situation, we cut off quantum states too close to the boundary of $P_X$ by taking a relatively compact open subset $V$ of $\mathring{P}_X$. Still, we need $\hbar q$ small. Note that $\hbar, q$ are competing factors - $\hbar$ is considered small but in general, $g$ can have non-zero fibrewise Fourier coefficients $\widehat{g}_q$ for $q$ as large as we want. Therefore, we need $g \in \mathcal{C}^\infty(X, \mathbb{C})_{<\infty}$ so as to set a bound controlling $q$. Eventually, we examine the phenomenon as $\hbar$ tends to zero. After all, quantization is in principle a process passing from classical physics to quantum physics when $\hbar$ is infinitesimally small.\par
For an open subset $V$ of $P_X$ and an operator $A$ on $\mathcal{H}^\hbar$, we denote by $\lVert A \rVert_V$ the operator norm of the restriction $A \vert_{\mathcal{H}_V^\hbar}: \mathcal{H}_V^\hbar \to \mathcal{H}^\hbar$ with respect to the inner products, i.e.
\begin{align*}
\lVert A \rVert_V := \sup_{s \in \mathcal{H}_V^\hbar, s \neq 0} \frac{\lVert A(s) \rVert}{\lVert s \rVert}.
\end{align*}
Note that
\begin{equation*}
\lVert A \rVert_V = \sup_{s \in \mathcal{H}_V^\hbar, s \neq 0} \frac{\lVert (A \circ \pi_V^\hbar)(s) \rVert}{\lVert s \rVert} \leq \sup_{s \in \mathcal{H}^\hbar, s \neq 0} \frac{\lVert (A \circ \pi_V^\hbar)(s) \rVert}{\lVert s \rVert} = \lVert A \circ \pi_V^\hbar \rVert \leq \lVert \pi_V^\hbar \rVert \lVert A \rVert_V \leq \lVert A \rVert_V.
\end{equation*}
Therefore, $\lVert A \rVert_V = \lVert A \circ \pi_V^\hbar \rVert$. As explained in Proposition \ref{Proposition 3.11}, the sequence of Toeplitz operators $Q_f^\hbar$ is local in $f$. We shall estimate the operator norm of the restriction
\begin{align*}
\left. \left( Q_f^\hbar \circ Q_g^\hbar - Q_{f \star_N g}^\hbar \right) \right\vert_{\mathcal{H}_V^\hbar}: \mathcal{H}_V^\hbar \to \mathcal{H}^\hbar
\end{align*}
as $\hbar \to 0^+$. The upshot is the following theorem.

\begin{theorem}
	\label{Theorem 3.13}
	($=$ Theorem \ref{Theorem 1.1}) Let $f \in \mathcal{C}^\infty(X, \mathbb{C})$, $g \in \mathcal{C}^\infty(X, \mathbb{C})_{< \infty}$ and $V$ be a relatively compact open subset of $\mathring{P}_X$. Then there exists $\delta > 0$ such that for all $N \in \mathbb{N} \cup \{0\}$, there exists $K > 0$ such that
	\begin{equation*}
	\left\lVert Q_f^\hbar \circ Q_g^\hbar - Q_{f \star_N g}^\hbar \right\rVert_V \leq K \hbar^{N+1},
	\end{equation*}
	for all odd $k \in \mathbb{N}$ with $\hbar = \tfrac{1}{k} < \delta$.
\end{theorem}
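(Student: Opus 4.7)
The plan is to expand both $Q_f^\hbar \circ Q_g^\hbar$ and $Q_{f \star_N g}^\hbar$ on each basis vector $\sigma_\hbar^m$ with $m \in (kV) \cap \Lambda_\hbar$ via Proposition \ref{Proposition 3.9}, to recognise the difference entrywise in the basis $\{\sigma_\hbar^{m'}\}_{m' \in \Lambda_\hbar}$ as a Taylor remainder of $\widehat{f}_p$ weighted by $\widehat{g}_q$, and then to pass from a pointwise matrix-entry bound to the operator norm $\lVert \cdot \rVert_V$ via a Schur-type estimate in the spirit of Lemma \ref{Lemma 6.5}.

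First I would set up the Fourier expansions $f = \sum_{p \in \mathbb{Z}^n} \widehat{f}_p \, e^{\sqrt{-1} p \cdot \theta}$ and $g = \sum_{q \in S} \widehat{g}_q \, e^{\sqrt{-1} q \cdot \theta}$ on $\mathring{X}$, where $S \subset \mathbb{Z}^n$ is finite (this is exactly where the hypothesis $g \in \mathcal{C}^\infty(X, \mathbb{C})_{<\infty}$ enters). A direct computation from (\ref{Equation 2.5}) gives that the $p'$-th fibrewise Fourier coefficient of $f \star_N g$ equals $\sum_{q \in S} T_N[\widehat{f}_{p' - q}; \hbar q](x) \, \widehat{g}_q(x)$, where
\[
T_N[h; y](x) := \sum_{\lvert I \rvert \leq N} \frac{y^I}{I!} h^{(I)}(x)
\]
is the order-$N$ Taylor polynomial of $h(\,\cdot\, + y)$ at $x$. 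Combining this with (\ref{Equation 7.1}) and Lemma \ref{Lemma 3.12} (applied to the compact set $\overline{V}$ and the finite set $S$), which forces $m + q \in \Lambda_\hbar$ for all sufficiently small $\hbar$ whenever $m \in (kV) \cap \Lambda_\hbar$ and $q \in S$, I obtain
\[
\left( Q_f^\hbar \circ Q_g^\hbar - Q_{f \star_N g}^\hbar \right) \sigma_\hbar^m = \sum_{m + p' \in \Lambda_\hbar} \sigma_\hbar^{m + p'} \sum_{q \in S} \widehat{g}_q(\hbar m) \, R_{p' - q, q, N}(\hbar m),
\]
where $R_{p, q, N}(x) := \widehat{f}_p(x + \hbar q) - T_N[\widehat{f}_p; \hbar q](x)$ is the Taylor remainder.

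Next I would estimate this remainder uniformly. Choose a relatively compact open $V'$ with $\overline{V} \subset V' \subset \overline{V'} \subset \mathring{P}_X$; since $S$ is finite, for all $\hbar$ small enough the segment from $\hbar m$ to $\hbar m + \hbar q$ lies in $\overline{V'}$ for every $m \in (kV) \cap \Lambda_\hbar$ and every $q \in S$. Taylor's theorem with integral remainder, combined with Lemma \ref{Lemma 5.3}(2) applied to $\widehat{f}_p$ on $\overline{V'}$ with $r$ taken arbitrarily large, then yields a bound
\[
\lvert R_{p, q, N}(\hbar m) \, \widehat{g}_q(\hbar m) \rvert \leq C \, \hbar^{N+1} \, [p]^{-r},
\]
uniform in $q \in S$, $m \in (kV) \cap \Lambda_\hbar$, and $\hbar < \delta$; here one also uses boundedness of the finitely many $\widehat{g}_q$ on $\overline{V}$ and the fact that $\lvert q \rvert$ is uniformly bounded on $S$, so that the $\lvert q \rvert^{N+1}$ factor from Taylor's theorem is absorbed into the constant.

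Finally, reindexing via $p = p' - q$, the matrix entries of $E_N^\hbar := Q_f^\hbar \circ Q_g^\hbar - Q_{f \star_N g}^\hbar$ in the orthonormal basis (for $m \in (kV) \cap \Lambda_\hbar$ and $m' \in \Lambda_\hbar$) satisfy
\[
\lvert \langle E_N^\hbar \sigma_\hbar^m, \sigma_\hbar^{m'} \rangle \rvert \leq C \, \hbar^{N+1} \sum_{q \in S} [m' - m - q]^{-r}.
\]
Taking $r$ sufficiently large, the row and column sums of this matrix are each dominated by $C \lvert S \rvert \hbar^{N+1} \sum_{w \in \mathbb{Z}^n} [w]^{-r}$, a convergent quantity independent of $\hbar$. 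Hence both $\lVert E_N^\hbar \circ \pi_V^\hbar \rVert_1$ and $\lVert E_N^\hbar \circ \pi_V^\hbar \rVert_\infty$ are $\operatorname{O}(\hbar^{N+1})$, and Lemma \ref{Lemma 6.5} (after truncation to finite submatrices and passage to the limit) gives $\lVert E_N^\hbar \rVert_V = \lVert E_N^\hbar \circ \pi_V^\hbar \rVert \leq K \hbar^{N+1}$. The principal obstacle is the careful bookkeeping around the \emph{truncated} shift operators: one must argue that, after restriction to $m \in (kV) \cap \Lambda_\hbar$, no defect terms arising from would-be shifts with $m + q \notin \Lambda_\hbar$ contaminate the difference, so that the error reduces cleanly to a genuine Taylor remainder. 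This is exactly what Lemma \ref{Lemma 3.12}, the hypothesis $V \subset \subset \mathring{P}_X$, and the finite Fourier support of $g$ are designed to guarantee.
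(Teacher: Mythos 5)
Your proposal is correct and follows essentially the same route as the paper's own proof: expand both operators via Proposition \ref{Proposition 3.9}, use Lemma \ref{Lemma 3.12} together with the relative compactness of $V$ and the finite fibrewise Fourier support of $g$ to rule out truncation defects so the difference becomes a genuine Taylor remainder of $\widehat{f}_p$ weighted by $\widehat{g}_q$, bound it via Lemma \ref{Lemma 5.3} and Taylor's theorem with integral remainder, and pass to the operator norm through row/column sums and Lemma \ref{Lemma 6.5}. Your insertion of the intermediate compact set $\overline{V'}$ to house the Taylor segments is a minor (and slightly more careful) bookkeeping refinement of the same argument, so no further comparison is needed.
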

\begin{proof} 
	By assumption, there exists $l \in \mathbb{N} \cup \{0\}$ such that $g \in \mathcal{C}^\infty(X, \mathbb{C})_{\leq l}$. Define the finite subset
	\begin{align*}
	Z = \{ (q_1, ..., q_n) \in \mathbb{Z}^n: \max \{\lvert q_1 \rvert, ..., \lvert q_n \rvert\} \leq l \}
	\end{align*}
	of $\mathbb{Z}^n$. By Lemma \ref{Lemma 3.12}, we pick $\delta \in (0, \tfrac{1}{k_0})$ so that if $k \in \mathbb{N}$ is odd with $\hbar = \tfrac{1}{k} < \delta$, $r \in (k \overline{V}) \cap \Lambda_\hbar$ and $q \in Z$, then $r + q \in \Lambda_\hbar$.\par
	Fix $N \in \mathbb{N} \cup \{0\}$. We shall construct a bound $K$ by estimating the decay rate of the fibrewise Fourier coefficients of functions $f, g$ and their derivatives on $\mathring{X}$. For $m \in \mathbb{Z}^n$, define $[m]$ as in (\ref{Equation 5.2}). By Lemma \ref{Lemma 5.3}, for all multi-index $I$, there exist $C_{f, V, I}, C_{g, I} > 0$ such that for all $m \in \mathbb{Z}^n$,
	\begin{align*}
	\left\lvert \widehat{f}_m^{(I)}(x) \right\vert \leq & C_{f, V, I} [m]^{-2} \quad \text{for all } x \in \overline{V};\\
	\left\lvert m^I \widehat{g}_m(x) \right\vert \leq & C_{g, I} [m]^{-2} \quad \text{for all } x \in \mathring{P}_X.
	\end{align*}
	We see that $S := \sum_{m \in \mathbb{Z}^n} [m]^{-2} < +\infty$. Define
	\begin{align*}
	K = S^2 \sum_{\lvert I \rvert = N+1} \frac{C_{f, V, I} C_{g, I}}{I!} > 0.
	\end{align*}
	Consider any odd $k \in \mathbb{N}$ with $\hbar = \tfrac{1}{k} < \delta$. Define the error term $E_N^\hbar := Q_f^\hbar \circ Q_g^\hbar - Q_{f \star_N g}^\hbar$. Fix a quantum state $s = \sum_{m \in \Lambda_\hbar} s_m \sigma_\hbar^m \in \mathcal{H}^\hbar$.\par
	We shall apply Proposition \ref{Proposition 3.9} to compute $E_N^\hbar(s)$. On the one hand, we see that
	\begin{equation}
	\label{Equation 3.6}
	\begin{split}
	Q_f^\hbar Q_g^\hbar s = & \sum_{m \in \Lambda_\hbar} \left( \sum_{p, r \in \Lambda_\hbar} \widehat{f}_{m-p}(\hbar p) \widehat{g}_{p-r}(\hbar r) s_r \right) \sigma_\hbar^m\\
	= & \sum_{m \in \Lambda_\hbar} \left( \sum_{r \in \Lambda_\hbar} \sum_{q \in \Lambda_\hbar - r} \widehat{f}_{m-r-q}(\hbar (r+q)) \widehat{g}_q(\hbar r) s_r \right) \sigma_\hbar^m.
	\end{split}
	\end{equation}
	In the second line of (\ref{Equation 3.6}), we make a change of variable $q = p - r$. On the other hand, for any $i \in \mathbb{N} \cup \{0\}$ and $r \in \mathbb{Z}^n$, the  $r$th fibrewise Fourier coefficient of $C_i(f, g)$ is
	\begin{equation*}
	\sum_{\lvert I \rvert = i} \frac{1}{I!} \sum_{q \in \mathbb{Z}^n} q^I \cdot \widehat{f}_{r-q}^{(I)}(x) \widehat{g}_q(x),
	\end{equation*}
	and hence
	\begin{equation}
	\label{Equation 3.7}
	Q_{C_i(f, g)}^\hbar s = \sum_{m \in \Lambda_\hbar} \left( \sum_{r \in \Lambda_\hbar} \sum_{q \in \mathbb{Z}^n} \sum_{\lvert I \rvert = i} \frac{q^I}{I!} \widehat{f}_{m-r-q}^{(I)}(\hbar r) \widehat{g}_q(\hbar r) s_r \right) \sigma_\hbar^m.
	\end{equation}
	Note that fixing $m, r \in \Lambda_\hbar$, the variable $q$ in the last line of (\ref{Equation 3.6}) only ranges over $\Lambda_\hbar - r$ while the same variable $q$ in (\ref{Equation 3.7}) ranges over $\mathbb{Z}^n$. When $q \not\in \Lambda_\hbar - r$, i.e. $r + q \not\in \Lambda_\hbar$ (we see that $m - r - q \neq (0, ..., 0)$ since $m \in \Lambda_\hbar$), the point $\hbar (r + q)$ is no longer in the moment polytope $P_X$ and hence $\widehat{f}_{m-r-q}(\hbar (r + q))$ is undefined. In layman's terms, large fibrewise Fourier modes of $g$ might be truncated when we apply the operator $Q_g^\hbar$ onto distributional sections supported on Bohr-Sommerfeld fibres close to $X \backslash \mathring{X}$, whereas some of these informations would possibly be recovered when we apply $Q_{f \star_N g}^\hbar$ on them.\par
	For the ease of expression, we extend every non-zero fibrewise Fourier coefficients $\widehat{f}_p$'s onto the entire $\mathfrak{t}^*$ by zero. Note that these might not be smooth extensions. Then one can write
	\begin{align*}
	E_N^\hbar(s) = \sum_{m \in \Lambda_\hbar} \left( \sum_{r \in \Lambda_\hbar} \sum_{q \in \mathbb{Z}^n} R_{m, r, q}^N \widehat{g}_q(\hbar r) s_r \right) \sigma_\hbar^m,
	\end{align*}
	where for all $m, r \in \Lambda_\hbar$, and $q \in \mathbb{Z}^n$, $R_{m, r, q}^N$ is the remainder term given by
	\begin{align*}
	R_{m, r, q}^N := \widehat{f}_{m-r-q}(\hbar (r + q)) - \sum_{\lvert I \rvert \leq N} \frac{(\hbar q)^I}{I!} \widehat{f}_{m-r-q}^{(I)}(\hbar r).
	\end{align*}
	If $r + q \in \Lambda_\hbar$, then by Taylor's Theorem, $R_{m, r, q}^N$ can be expressed in integral form:
	\begin{align*}
	R_{m, r, q}^N = (N+1) \sum_{\lvert I \rvert = N+1} \frac{\hbar^{N+1} q^I}{I!} \int_0^1 (1 - t)^N \widehat{f}_{m-r-q}^{(I)}(\hbar (r + t q)) dt.
	\end{align*}
	Otherwise, as we are not able to apply Taylor's Theorem, those terms $R_{m, r, q}^N \widehat{g}_q(\hbar r) s_r$ might not be controllable.\par
	The resort to continue our argument is to cut off these undesirable terms by composing with the orthogonal projection $\pi_V^\hbar: \mathcal{H}^\hbar \to \mathcal{H}_V^\hbar$. Since $\hbar < \delta$, we have the expression
	\begin{align*}
	(E_N^\hbar \circ \pi_V^\hbar)(s) = \sum_{m \in \Lambda_\hbar} \left( \sum_{r \in V_\hbar} \sum_{q \in \Lambda_\hbar - r} R_{m, r, q}^N \widehat{g}_q(\hbar r) s_r \right) \sigma_\hbar^m,
	\end{align*}
	where $V_\hbar = (k V) \cap \Lambda_\hbar$. Thus we have
	\begin{align*}
	\lVert (E_N^\hbar \circ \pi_V^\hbar)(s) \rVert_1 = & \hbar^{N+1} \sum_{r \in V_\hbar} \lvert s_r \rvert \sum_{q \in \Lambda_\hbar - r} \sum_{m \in \Lambda_\hbar} \sum_{\lvert I \rvert = N+1} \frac{C_{f, V, I} C_{g, I}}{I! \cdot [m-r-q]^2 [q]^2} \leq K \hbar^{N+1} \lVert s \rVert_1,\\
	\lVert (E_N^\hbar \circ \pi_V^\hbar)(s) \rVert_\infty = & \hbar^{N+1} \lVert s \rVert_\infty \sup_{m \in \Lambda_\hbar} \sum_{r \in V_\hbar} \sum_{q \in \Lambda_\hbar - r} \sum_{\lvert I \rvert = N+1} \frac{C_{f, V, I} C_{g, I}}{I! \cdot [m-r-q]^2 [q]^2} \leq K \hbar^{N+1} \lVert s \rVert_\infty.
	\end{align*}
	Thus, $\lVert E_N^\hbar \circ \pi_V^\hbar \rVert_1 \leq K \hbar^{N+1}$ and $\lVert E_N^\hbar \circ \pi_V^\hbar \rVert_\infty \leq K \hbar^{N+1}$. By Lemma \ref{Lemma 6.5}, we conclude that $\lVert E_N^\hbar \rVert_V = \lVert E_N^\hbar \circ \pi_V^\hbar \rVert \leq \sqrt{\lVert E_N^\hbar \circ \pi_V^\hbar \rVert_1 \lVert E_N^\hbar \circ \pi_V^\hbar \rVert_\infty} \leq K \hbar^{N+1}$.
\end{proof}

\appendix

\keywords{}

\bibliographystyle{amsplain}
\bibliography{References}

\end{document}